%
%
%
\documentclass[12pt,a4paper,draft]{article}
 \textwidth=150mm
 \textheight=195mm
 \sloppy

 \usepackage{latexsym}
 \usepackage[leqno, namelimits, sumlimits]{amsmath}
 \usepackage{amssymb, amsthm, amsfonts}

\newtheorem{Theorem}{Theorem}[section]
\newtheorem{Lemma}[Theorem]{Lemma}
\newtheorem{Corollary}[Theorem]{Corollary}

\newtheorem{proposition}[Theorem]{Proposition}

\newtheorem{remark}[Theorem]{Remark}


\newcommand{\R}{{\mathbb R}}
\newcommand{\N}{{\mathbb N}}

\newcommand{\eps}{\varepsilon}
\def\({\Bigl(}
\def\){\Bigr)}

\newcommand{\al}{\alpha}
\newcommand{\be}{\beta}
\newcommand{\ga}{\gamma}

\newcommand{\Om}{\Omega}

\newcommand{\pa}{\partial}



\newcommand{\cT}{{\mathcal T}}
\newcommand{\cS}{{\mathcal S}}
\newcommand{\cP}{{\mathcal P}}
\DeclareMathOperator{\opspan}{span}
\DeclareMathOperator{\opkern}{kern}
\DeclareMathOperator{\oploc}{loc}
\DeclareMathOperator{\diag}{diag}

\numberwithin{equation}{section}

\title{Bifurcation in a multi-component system of nonlinear Schr\"odinger equations}
\author{Thomas Bartsch}

\date{}


\begin{document}

\maketitle

\begin{center}
{\sl Dedicated to Professor Kasimierz G\c eba.}
\end{center}

\vspace{.5cm}
\begin{abstract}
We consider the system
\begin{equation*}
-\Delta u_j + a(x)u_j = \mu_j u_j^3 + \be\sum_{k\ne j}u_k^2u_j,\ u_j>0,
  \qquad  j=1,\dots,n,
\end{equation*}
on a possibly unbounded domain $\Om\subset\R^N$, $N\le3$, with Dirichlet boundary conditions. The system appears in nonlinear optics and in the analysis of mixtures of Bose-Einstein condensates. We consider the self-focussing (attractive self-interaction) case $\mu_1,\dots,\mu_n > 0$ and take $\be\in\R$ as bifurcation parameter. There exists a branch of positive solutions with $u_j/u_k$ being constant for all $j,k\in\{1,\dots,n\}$. The main results are concerned with the bifurcation of solutions from this branch. Using a hidden symmetry we are able to prove global bifurcation even when the linearization has even-dimensional kernel (which is always the case when $n>1$ is odd).
\end{abstract}

{\bf Key words}: coupled Gross-Pitaevskii equations, system of nonlinear Schr\"odin\-ger equations, self-focussing, solitary waves, repulsive interaction, global bifurcation\\

{\bf  AMS subject classification}: 35B05, 35B32, 35J50, 35J55,
58C40, 58E07


\section{Introduction}
\label{intro}
The system of coupled Gross-Pitaevskii equations
\begin{equation} \label{eq:NLS}
-i\partial_t\psi_j(x,t)
    = \Delta_x \psi_j + \mu_j |\psi_j|^2\psi_j + \be\sum_{k\ne j}|\psi_k|^2\psi_j
  \qquad  j=1,\dots,n,
\end{equation}
in $\R^N$, $N\le3$, with parameters $\mu_1,\dots,\mu_n>0$, $\be\in\R$, has found
considerable interest in the last years. It appears in nonlinear optics and in
models for mixtures of Bose-Einstein condensates; see
\cite{AA,esry-etal:1997,malomed:2008} for physics, and
\cite{amco2,bartsch-dancer-wang:2010,bartsch-wang:2006,bartsch-wang-wei:2007,
lin-wei:2005a,MMP,nttv,pom,sirakov:2007} for mathematics papers. We deal with the case of attractive self-interaction, i.~e.\ $\mu_1,\dots,\mu_n>0$. Concerning $\be\in\R$ our bifurcating solutions appear in the range $\be<\mu_1$; in fact, all but finitely many bifurcations appear in the range $\be<0$ of repelling interaction between different components. The ansatz
\[
\psi_j(x,t) = e^{it}u_j(x),  \qquad  j=1,\dots,n,
\]
for stationary waves leads to the following elliptic system for the amplitudes
$u_1,\dots,u_n$:
\begin{equation} \label{eq:special}
\left\{
 \begin{aligned}
  &-\Delta u_j + u_j = \mu_j u_j^3 + \be\sum_{k\neq j}u_k^2u_j,\\
  &u_j\in H^1(\R^N),\ u_j>0,  \\
  \end{aligned}
 \right.\qquad j=1,\dots,n.
\end{equation}
We keep $\mu_1,\ldots,\mu_n>0$ fixed and take $\be$ as bifurcation parameter.

Most of the above mentioned papers papers deal with the case $n=2$ of two components. Using variational methods, the existence of ground and bound states is obtained. In \cite{bartsch-dancer-wang:2010} a different approach is pursued using bifurcation methods, but only for $n=2$. We shall extend this bifurcation approach from $n=2$ to arbitrary $n\ge2$. This leads to interesting new features and difficulties. For instance, in the radial setting of \cite{bartsch-dancer-wang:2010} the linearizations have a one-dimensional kernel at the bifurcation parameter. This allows the use of degree theory to prove global bifurcation of solutions.

For more than two equations we have to deal with high-dimensional kernels forced by the structure of the system. In fact, in the radial setting the dimension of the kernels is precisely $n-1$. In the general case, the dimensions of the kernels are multiples of $n-1$. Therefore if $n$ is odd the kernels always have even dimensions, so there will never be a change of degree to prove bifurcation. Observe that there is no symmetry subgroup $G$ of the symmetric group $S_n$ leaving the system invariant, since we do not assume that (some of) the coefficients $\mu_1,\ldots,\mu_n$ are equal.

We discover a hidden symmetry of the problem which explains the high nullities at the bifurcation parameters, and which can be used to prove multiple global bifurcation branches. Observe that due to the lack of a manifest symmetry of the system (except for the radial symmetry of the domain which allows us to work on the space of radially symmetric functions), we cannot apply the equivariant degree or the equivariant gradient degree as presented in \cite{Balanov-Krawcewicz-Steinlein:2006,Geba:1997,Rybicki:2005} and the references therein.

Actually, we shall deal with a more general problem. Let $\Om\subset\R^N$, $N\le3$, be a domain which is invariant under a closed subgroup $G\subset O(N)$, and suppose $a\in L^\infty_{\oploc}(\Om)$ is invariant under $G$. We require that $-\Delta+a$ is a positive self-adjoint operator on $L^2(\Om)$, and define
\[
  E:=\left\{u\in H^1_0(\Om): u\text{ is }G\text{-invariant}, \int_\Om au^2<\infty\right\}.
\]
The norm in $E$ is given by
\[
  \|u\|^2_E=\int_\Om|\nabla u|^2+\int_\Om a u^2.
\]
We also require that
\begin{itemize}
\item[(A)] the embedding $E\hookrightarrow L^4(\Om)$ is compact.
\end{itemize}
This is the case, for instance, if $\Om$ is bounded and $G$ is trivial, or if $\Om$ is radially symmetric and $G=O(N)$, $2\le N\le3$. It is also the case if $\Om=\R^N$, $2\le N\le3$, and $a(x)\to\infty$ as $|x|\to\infty$; see \cite{bartsch-pankov-wang:2001} for more general results in this direction. The problem we investigate is
\begin{equation}
  \label{eq:main}
  \left\{
 \begin{aligned}
  &-\Delta u_j + a(x)u_j = \mu_j u_j^3 + \be\sum_{k\neq j}u_k^2u_j\ ,
      &&\qquad j=1,\dots,n,\\
  &u_j\in E,\ u_j>0.  \\
  \end{aligned}
 \right.
\end{equation}
As mentioned above, this covers the important special case $\Om=\R^N$, $N=2$ or $N=3$, $E=H^1_{rad}(\R^N)$.

A solution $(u_1,\ldots,u_n)$ of \eqref{eq:main} is said to be locked if $u_i/u_j$ is constant for all $i,j$. In this paper we first describe the set of locked solutions of \eqref{eq:main}. We then investigate the bifurcation of non-locked solutions from the set of locked solutions. In particular, we shall prove the bifurcation of partially locked solutions where $u_i/u_j$ is constant for some $i\ne j$, but not for all indices.

Without loss of generality we assume $0<\mu_1\leq\mu_2\leq\ldots\leq\mu_n$ throughout the paper.


\section{Branches of locked solutions}
\label{sec:locked branches}

Given a solution of the scalar equation
\begin{equation}\label{eq:scalar}
\left\{
    \begin{aligned}
{}&      -\Delta w+a(x)w=w^3\\
{}&w\in E, w>0
    \end{aligned}
\right.
\end{equation}
there exists a branch $\cT_w\subset\R\times E^n$ of locked solutions $(\be,u_1,\ldots,u_n)$ of \eqref{eq:main} such that each $u_i$ is a multiple of $w$. In order to describe this branch we set
\begin{equation}\label{eq:def-g}
  g(\be):=1+\be\sum^n_{k=1}\frac{1}{\mu_k-\be}
\end{equation}
and for $j=1,\ldots,n$:
\begin{equation}\label{eq:def-al}
\al_j(\be):=\big((\mu_j-\be)g(\be)\big)^{-1/2}
\end{equation}
Observe that $g$ is defined and strictly increasing in the interval $(-\infty,\mu_1)$, and that it satisfies $g(0)=1$, $g(\be)\to1-n<0$ as $\be\to-\infty$. Thus there exists a unique $\overline{\be}=\overline{\be}(\mu_1,\ldots,\mu_n)<0$ such that $g(\overline{\be})=0$. Moreover, $g$ is defined and negative for $\be>\mu_n$. Consequently, the functions $\al_1,\ldots,\al_n$ are defined for $\be\in(\overline{\be},\mu_1)\cup(\mu_n,\infty)$.

\begin{proposition}\label{prop:locked}
Problem \eqref{eq:main} has a locked solution $(\be, u_1,\ldots,u_n)$ precisely in the following cases:
\renewcommand{\labelenumi}{(\roman{enumi})}
 \begin{enumerate}
 \item $\be\in (\overline{\be},\mu_1)\cup(\mu_n,\infty)$: Then
   $u_j=\al_j(\be)w, j=1,\ldots,n$, for some solution
   $w$ of \eqref{eq:scalar}.
\item $\be=\mu_1=\ldots=\mu_n$: Then $u_j=\al_jw$,
  $j=1,\ldots,n$, for some solution $w$ of \eqref{eq:scalar},
  $\al_1,\ldots, \al_n>0$, $\al^2_1+\ldots+\al^2_n=1/\be$.
 \end{enumerate}
\end{proposition}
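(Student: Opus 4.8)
The plan is to exploit the ``locked'' ansatz directly. If $(\be,u_1,\dots,u_n)$ is a locked solution, write $u_j=\al_j w$ with $\al_j>0$ and $w>0$ a fixed function (we may absorb the scaling into $w$, or fix the normalization later). Substituting into \eqref{eq:main} and using that all the $u_j$ are positive multiples of $w$, the $j$-th equation becomes
\begin{equation*}
  -\al_j\Delta w + a(x)\al_j w
    = \mu_j\al_j^3 w^3 + \be\al_j\Big(\sum_{k\ne j}\al_k^2\Big) w^3 .
\end{equation*}
Dividing by $\al_j>0$ we get $-\Delta w + a(x) w = \big(\mu_j\al_j^2 + \be\sum_{k\ne j}\al_k^2\big) w^3$ for each $j$. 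Hence the scalar factor $\lambda_j:=\mu_j\al_j^2+\be\sum_{k\ne j}\al_k^2$ must be independent of $j$; call the common value $\lambda>0$ (positivity is forced since $w>0$ solves a positive-operator equation with right-hand side $\lambda w^3$). Rescaling $w\mapsto \lambda^{-1/2}w$ — which is legitimate because \eqref{eq:scalar} is $3$-homogeneous, so $\lambda^{-1/2}w$ solves \eqref{eq:scalar} iff $\lambda^{1/2}\cdot\lambda^{-1/2}w$ does — we reduce to $\lambda=1$, i.e.\ to solving \eqref{eq:scalar} together with the algebraic system
\begin{equation}\label{eq:algsys}
  \mu_j\al_j^2 + \be\,S = 1 + \be\,\al_j^2,
    \qquad S:=\sum_{k=1}^n\al_k^2,\qquad j=1,\dots,n,
\end{equation}
where I added $\be\al_j^2$ to both $\sum_{k\ne j}$ and the right side to symmetrize. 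So the whole problem is to analyze \eqref{eq:algsys} for $\al_1,\dots,\al_n>0$.

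Next I would solve \eqref{eq:algsys} explicitly. Rearranging, $(\mu_j-\be)\al_j^2 = 1-\be S$, so if $\mu_j\ne\be$ for all $j$ then $\al_j^2 = (1-\be S)/(\mu_j-\be)$. Summing over $j$ gives $S = (1-\be S)\sum_k 1/(\mu_k-\be)$, hence $S\big(1+\be\sum_k 1/(\mu_k-\be)\big) = \sum_k 1/(\mu_k-\be)$, i.e.\ $S\,g(\be) = \sum_k 1/(\mu_k-\be)$ with $g$ as in \eqref{eq:def-g}. Two cases arise. If $g(\be)\ne0$, then $S=\frac{1}{g(\be)}\sum_k\frac{1}{\mu_k-\be}=\frac{1}{\be}\big(g(\be)-1\big)\big/g(\be)$; a short computation gives $1-\be S = 1/g(\be)$, so $\al_j^2 = \big((\mu_j-\be)g(\be)\big)^{-1}$, which is exactly $\al_j(\be)^2$ from \eqref{eq:def-al}. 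The positivity requirement $\al_j^2>0$ for all $j$ then forces $(\mu_j-\be)g(\be)>0$ simultaneously for $j=1,\dots,n$: since $\mu_1\le\dots\le\mu_n$, this holds iff either $\be<\mu_1$ and $g(\be)>0$ (i.e.\ $\be\in(\overline\be,\mu_1)$, using the monotonicity and sign facts about $g$ established before the Proposition), or $\be>\mu_n$ and $g(\be)<0$ (which holds for all $\be>\mu_n$ by those same facts). That yields case (i). If instead $g(\be)=0$, then $\sum_k 1/(\mu_k-\be)=0$ is forced, which contradicts $\be\in(\overline\be,\mu_1)$ where every term $1/(\mu_k-\be)>0$; and for $\be$ outside $[\mu_1,\mu_n]$ we have $g(\be)\ne0$ anyway, so this subcase is vacuous except possibly when some $\mu_j=\be$.

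The remaining possibility is that $\be=\mu_j$ for some $j$. Then the $j$-th equation of \eqref{eq:algsys} reads $\be S = 1$, i.e.\ $S=1/\be$ is forced, and $1-\be S=0$; but then for any index $i$ with $\mu_i\ne\be$ we get $(\mu_i-\be)\al_i^2=0$, i.e.\ $\al_i=0$, contradicting positivity. Hence $\be=\mu_j$ is only consistent if $\mu_i=\be$ for \emph{all} $i$, i.e.\ $\mu_1=\dots=\mu_n=\be$. In that case \eqref{eq:algsys} collapses to the single constraint $\be S=1$, leaving $\al_1,\dots,\al_n>0$ free subject only to $\al_1^2+\dots+\al_n^2=1/\be$; this is case (ii). Finally I would check the converse — that the formulas in (i) and (ii) do produce genuine solutions of \eqref{eq:main} — which is immediate by reversing the substitution, provided \eqref{eq:scalar} itself has a solution; one should remark that the proposition is stated as a characterization of which $\be$ admit a locked solution, so nonexistence of $w$ just makes the statement vacuously consistent. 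I expect the only delicate point to be bookkeeping the sign conditions on $(\mu_j-\be)g(\be)$ using the previously recorded behavior of $g$ (namely $g$ increasing on $(-\infty,\mu_1)$ with a unique zero $\overline\be<0$, and $g<0$ on $(\mu_n,\infty)$); everything else is elementary algebra and the $3$-homogeneity rescaling.
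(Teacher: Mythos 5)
Your proof is correct and follows essentially the same route as the paper: reduce to the algebraic system $\mu_j\al_j^2+\be\sum_{k\ne j}\al_k^2=1$ (your $(\mu_j-\be)\al_j^2=1-\be S$ is just a rewriting of the paper's \eqref{eq:alpha1}--\eqref{eq:alpha2}), solve it explicitly to get $\al_j=\al_j(\be)$ when the $\mu_j-\be$ all have one sign and $g(\be)\ne0$, and show that otherwise one is forced into $\be=\mu_1=\cdots=\mu_n$. The only blemishes are cosmetic: the rescaling should send $w\mapsto\lambda^{1/2}w$ (with $\al_j\mapsto\lambda^{-1/2}\al_j$), and the subcase $g(\be)=0$ is killed most directly by noting that $\sum_k 1/(\mu_k-\be)=0$ would give $g(\be)=1$, a contradiction.
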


\begin{proof}
If $(\be,u_1,\ldots, u_n)$ is a locked solution then $u_1$ solves
$-\Delta u_1+a(x)u_1 = cu^3_1$ for some constant $c>0$. It follows that $w=c^{1/2}u_1$ solves \eqref{eq:scalar}, and all $u_j$'s are multiples of $w$.

Now write $u_j=\al_jw$, $j=1,\ldots,n$, where $w$ is a solution of \eqref{eq:scalar}. Then \eqref{eq:main} leads to
\[
  \al_jw^3=(-\Delta+a(x))\al_j
  w=\left(\mu_j\al^3_j+\be\sum_{k\neq j}\al^2_k\al_j\right)w^3
\]
hence,
\begin{equation}\label{eq:alpha1}
  \mu_j\al^2_j+\be\sum_{k\neq
  j}\al^2_k=1,\quad j=1,\ldots, n.
\end{equation}
This implies
\begin{equation}\label{eq:alpha2}
  (\mu_i-\be)\al^2_i=(\mu_j-\be)\al^2_j\quad\text{for all }i,j.
\end{equation}
In case (i) it follows from \eqref{eq:alpha2} that $\al_j=\left(\frac{\mu_1-\be}{\mu_j-\be}\right)^{1/2}\al_1$ for $j=1,\ldots,n$. A simple computation using \eqref{eq:alpha1} now yields
\[
\al_1=\big((\mu_1-\be)g(\be)\big)^{-1/2}=\al_1(\be),
\]
and $\al_j=\al_j(\be)$ for all $j$ follows immediately. This proves in case (i) that a locked solution $(\be, u_1,\ldots,u_n)$ necessarily has the form $u_j=\al_j(\be)w$ for some
solution $w$ of \eqref{eq:scalar}. If (i) does not apply then we must have $\be=\mu_1=\ldots=\mu_n$, which implies $\be(\al^2_1+\ldots +\al^2_n)=1$, again by \eqref{eq:alpha1}. This is as claimed in (ii).

Finally, an elementary calculation shows that $(\be, u_1,\ldots, u_n)$ as described in (i) or (ii) is a solution of \eqref{eq:main}.
\end{proof}

Now we fix a solution $w$ of \eqref{eq:scalar} and set
\[
  u(\be):=(\al_1(\be)w,\ldots,\al_n(\be)w)
   \text{ for } \be\in(\overline{\be},\mu_1)\cup(\mu_n,\infty).
\]
We want to investigate the bifurcation of solutions of \eqref{eq:main} from
\[
\cT_w:=\left\{(\be,u(\be)):\be\in(\overline{\be},\mu_1)\cup(\mu_n,\infty)\right\}.
\]


\section{Necessary conditions for bifurcation}
\label{sec:bif-nec}

We need to linearize \eqref{eq:main} at $(\be,u(\be))\in\cT_w$. A simple calculation leads to the system
\begin{equation}
  \label{eq:main-lin}
  \left\{
    \begin{aligned}
     {}&(-\Delta+a(x))\phi=w^2 C(\be)\phi\\
{}&\phi=(\phi_1,\ldots, \phi_n)^\top\in E^n
    \end{aligned}
\right.
\end{equation}
with
\[
  C(\be)=E_n+\frac{2}{g(\be)} D(\be) \in \R^{n\times n}.
\]
Here $E_n$ is the $n\times n$ unit matrix and
\begin{equation*}\label{eq:def-ga}
  D(\beta)=\left(
    \begin{matrix}
      \mu_1\gamma^2_1&\beta\gamma_1\gamma_2&\ldots
      &\beta\gamma_1\gamma_n\\
\beta\gamma_1\gamma_2&\mu_2\gamma^2_2&\ldots&\beta\gamma_2\gamma_n\\
\dots &\dots & & \dots\\
\beta\gamma_1\gamma_n&\beta\gamma_2\gamma_n&\ldots& \mu_n\gamma^2_n.
    \end{matrix}
\right)
\end{equation*}
where $\ga_j=\ga_j(\be):=(\mu_j-\be)^{-1/2}$. Hence, $D(\beta)=(d_{ij}(\beta))_{i,j=1,\ldots,n}$ with
\begin{equation*}
d_{ij}(\beta)=\left\{
 \begin{aligned}
 &\beta\gamma_i(\beta)\gamma_j(\beta)
  = \frac{\be}{\big((\mu_i-\be)(\mu_j-\be)\big)^{1/2}}
  &&\quad\text{if }i\neq j;\\
 &\mu_i\gamma_i(\beta)^2 = \frac{\mu_i}{\mu_i-\be}
  &&\quad\text{if }i=j.
 \end{aligned}
\right.
\end{equation*}
Observe that $C(\be)$ and $D(\be)$ are symmetric matrices.

\begin{Lemma}\label{lem:c-eigen}
\begin{itemize}
\item[\rm a)] $C(\be)$ has the eigenvalues $3$ and
\[
f(\be):=1+\frac{2}{g(\be)} = 1+\frac{2}{1+\be\sum_{k=1}^n\frac{1}{\mu_k-\be}}.
\]
\item[\rm b)] If $\be\neq 0$ then the eigenvalue $3$ is simple with eigenvector
\begin{equation}\label{eq:first-ev}
b_1(\be)=\left(\ga_1(\be),\dots,\ga_n(\be)\right)^\top
=\left((\mu_1-\be)^{-1/2},\ldots,(\mu_n-\be)^{-1/2}\right)^\top,
\end{equation}
and the eigenvalue $f(\be)$ has multiplicity $n-1$ with eigenspace $b_1(\be)^\perp$. If $\be=0$ then $f(0)=3$ and $C(0)=3E_n$.
\end{itemize}
\end{Lemma}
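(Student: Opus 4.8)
The plan is to recognize $C(\be)$ as a rank-one perturbation of a scalar matrix, after which everything follows by inspection. The key observation, which I would establish first, concerns the diagonal of $D(\be)$: writing $\mu_i=(\mu_i-\be)+\be$ gives
\[
d_{ii}(\be)=\mu_i\ga_i(\be)^2=\frac{\mu_i}{\mu_i-\be}=1+\frac{\be}{\mu_i-\be}=1+\be\,\ga_i(\be)^2,
\]
while $d_{ij}(\be)=\be\,\ga_i(\be)\ga_j(\be)$ for $i\ne j$. Combining these, and with $b_1(\be)=(\ga_1(\be),\dots,\ga_n(\be))^\top$ as in \eqref{eq:first-ev}, one obtains the identity $D(\be)=E_n+\be\,b_1(\be)b_1(\be)^\top$.

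Substituting this into the definition of $C(\be)$ gives
\[
C(\be)=E_n+\frac{2}{g(\be)}\Bigl(E_n+\be\,b_1(\be)b_1(\be)^\top\Bigr)=f(\be)\,E_n+\frac{2\be}{g(\be)}\,b_1(\be)b_1(\be)^\top .
\]
Now the spectral data are immediate. On the hyperplane $b_1(\be)^\perp$ the rank-one term vanishes, so $C(\be)$ restricts there to $f(\be)\,E_n$; this yields the eigenvalue $f(\be)$ on an $(n-1)$-dimensional space. On the line spanned by $b_1(\be)$ the eigenvalue is $f(\be)+\frac{2\be}{g(\be)}\,|b_1(\be)|^2$. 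To evaluate it I would use $|b_1(\be)|^2=\sum_{k=1}^n\frac{1}{\mu_k-\be}$ together with the definition \eqref{eq:def-g} of $g$, which gives $\be\,|b_1(\be)|^2=g(\be)-1$; then
\[
f(\be)+\frac{2\be}{g(\be)}|b_1(\be)|^2=1+\frac{2}{g(\be)}+\frac{2\bigl(g(\be)-1\bigr)}{g(\be)}=1+2=3 .
\]
This proves a).

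For b), note that $f(\be)=3$ holds exactly when $g(\be)=1$, i.e.\ when $\be\sum_{k=1}^n\frac{1}{\mu_k-\be}=0$; since the sum is nonzero for $\be\in(\overline{\be},\mu_1)\cup(\mu_n,\infty)$, this forces $\be=0$. Hence for $\be\ne0$ the eigenvalue $3$ is simple with eigenvector $b_1(\be)$, and $f(\be)$ has multiplicity $n-1$ with eigenspace $b_1(\be)^\perp$, as claimed. If $\be=0$ then $g(0)=1$, so $f(0)=3$, and $D(0)=E_n$ gives $C(0)=E_n+2E_n=3E_n$. The only non-mechanical step is spotting the rank-one structure of $D(\be)$; once that is in place there is no real obstacle, the remainder being a short computation with \eqref{eq:def-g}.
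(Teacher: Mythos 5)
Your proof is correct. You take a mildly but genuinely different route from the paper: you first identify the rank-one structure $D(\be)=E_n+\be\,b_1(\be)b_1(\be)^\top$, so that $C(\be)=f(\be)E_n+\frac{2\be}{g(\be)}b_1(\be)b_1(\be)^\top$ and the whole spectral decomposition (eigenvalue $f(\be)$ on $b_1(\be)^\perp$, eigenvalue $f(\be)+\frac{2\be}{g(\be)}|b_1(\be)|^2=3$ on $\R b_1(\be)$) drops out at once. The paper instead verifies the eigenvector equations directly: it checks $D(\be)b_1(\be)=g(\be)b_1(\be)$, hence $C(\be)b_1(\be)=3b_1(\be)$, and then exhibits explicit vectors $b_j(\be)$, $j=2,\dots,n$, spanning $b_1(\be)^\perp$ with $D(\be)b_j(\be)=b_j(\be)$, hence $C(\be)b_j(\be)=f(\be)b_j(\be)$. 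Your argument is arguably cleaner and explains \emph{why} the spectrum has this form; the paper's explicit basis $b_2(\be),\dots,b_n(\be)$ of the eigenspace $b_1(\be)^\perp$ is, however, reused later (in the proof of Proposition \ref{prop:bif-nec}\,b)), which is a practical reason to record those vectors here. Your treatment of the case distinction in b) — $f(\be)=3$ iff $g(\be)=1$ iff $\be\sum_k(\mu_k-\be)^{-1}=0$, and the sum has a fixed sign on each component of the parameter domain — is also a correct and slightly more explicit justification than the paper's bare assertion that $f(\be)\ne3$ for $\be\ne0$.
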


\begin{proof}
A simple calculation shows that $D(\be)b_1(\be)=g(\be)b_1(\be)$, hence
$C(\be)b_1(\be)=3b_1(\be)$. Moreover, setting
\begin{equation}\label{eq:eigen}
b_j(\be)=(b_{j1},\ldots,b_{jn})^\top\quad
 \text{where } b_{jk}=
  \begin{cases}
   \ga_j(\be) &\text{if } k=1,\\
   -\ga_1(\be) &\text{if } k=j,\\
   0 &\text{else.}
  \end{cases}
\end{equation}
for $j=2,\ldots,n$ we have $b_j(\be)\perp b_1(\be)$ and $D(\be)b_j(\be) = b_j(\be)$. This implies $C(\be)b_j(\be)=f(\be)b_j(\be)$ for $j=2,\ldots, n$.

Since $f(\be)\neq 3$ for $\be\neq 0$ we see that $C(\be)$ has the eigenvalue $f(\be)$ with eigenspace $\opspan\{b_2,\ldots,b_n\}=b_1(\be)^\perp$. If $\be=0$ then $f(0)=3$, hence $3$ is the only eigenvalue of the symmetric matrix $C(0)$.
\end{proof}

\begin{remark}\label{rem:f}
Observe that $f:(\overline{\be},\mu_1)\to (1,\infty)$ is a strictly decreasing diffeomorphism. In fact,
\[
f'(\be) = \frac{-2g'(\be)}{g(\be)^2} < 0
 \quad\text{for }\overline{\be}<\be<\mu_1.
\]
Moreover, $f(\be)\to 1$ as $\be\nearrow\mu_1$, and $f(\be)\to\infty$ as
$\be\searrow\overline{\be}$.
\end{remark}

\begin{remark}\label{rem:T}
Since $C(\be)$ is a symmetric matrix depending smoothly on $\be$, and since the eigenvector $b_1(\be)$ also depends smoothly on $\be$, there exists a smooth map $T:(\overline{\be},\mu_1)\to SO(n)$ such that
\[
  T(\be)^{-1} C(\be)T(\be)=\mathrm{diag} (3,f(\be),\ldots, f(\be)).
\]
\end{remark}

A point $(\be,u(\be))\in \cT_w$ can be a bifurcation point of solutions of \eqref{eq:main} only if \eqref{eq:main-lin} has a nontrivial solution $\phi$. Nontrivial solutions of \eqref{eq:main-lin} are closely related to the weighted eigenvalue problem
\begin{equation}\label{eq:scalar-lin}
  -\Delta\psi+a(x)\psi=\lambda w^2\psi,\quad \psi\in E.
\end{equation}

\begin{remark}
Recall that $w$ is a non-degenerate solution of \eqref{eq:scalar} if and only if $\lambda=3$ is not an eigenvalue of \eqref{eq:scalar-lin}. In the degenerate case where $\psi \neq 0$ is a solution of \eqref{eq:scalar-lin} with $\lambda=3$, we see that
$\phi=b_1(\be)\psi\in E^n$, $b_1(\be)\in\R^n$ as in \eqref{eq:first-ev}, is a solution \eqref{eq:main-lin} for every $\overline{\be}<\be<\mu_1$. If $w$ is not an isolated solution of \eqref{eq:scalar}, say $w_k\to w$ for a sequence of solutions of \eqref{eq:scalar}, then we have branches $\cT_{w_k}$ of solutions of \eqref{eq:main}. In this case every point on
$\cT_w$ is a bifurcation point for \eqref{eq:main}. The problem is more subtle if $w$ is an isolated, degenerate solution of \eqref{eq:scalar}. This case will not be treated here.
\end{remark}

From now on we assume that $w$ is a non-degenerate solution of \eqref{eq:scalar}. Then \eqref{eq:scalar-lin} has a sequence of eigenvalues $\lambda_1=1<\lambda_2<\lambda_3<\ldots$ such that $\lambda_k\neq 3$ for all $k\in\N$ and $\lambda_k\to\infty$ as $k\to\infty$. The multiplicity of $\lambda_k$ as eigenvalue of \eqref{eq:scalar-lin} is denoted by
\begin{equation}
\label{eq:kernel-scalar}
n_k=\dim V_k \quad\text{with } V_k=\opkern(-\Delta+a-\lambda_kw^2).
\end{equation}

\begin{proposition}\label{prop:bif-nec}
\begin{itemize}
\item[\rm a)] A point $(\be, u(\be))\in\cT_w$ is a bifurcation point only if $\be=\be_k:=f^{-1}(\lambda_k)$ for some $k\geq 2$ with $f$ from Lemma~\ref{lem:c-eigen}.
\item[\rm b)] If $\be=\be_k$ then $\phi=(\phi_1,\ldots, \phi_n)\in E^n$ solves \eqref{eq:main-lin} if, and only if, $\phi_j\in V_k$ for $j=1,\ldots, n$, and $\phi\perp b_1(\be)$ with $b_1(\be)\in\R^n$ from \eqref{eq:first-ev}, that is, $\sum^n_{j=1}\ga_j(\be)\phi_j=0$. In particular, the space of solutions of \eqref{eq:main-lin} has dimension $(n-1)\cdot n_k$.
\end{itemize}
\end{proposition}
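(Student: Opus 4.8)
The plan is to use the smooth family of orthogonal matrices $T(\be)$ from Remark~\ref{rem:T} to decouple the linearized system \eqref{eq:main-lin} into $n$ scalar weighted eigenvalue problems of the form \eqref{eq:scalar-lin}, and then read off both assertions from the spectrum of \eqref{eq:scalar-lin}. Since the entries of $T=T(\be)$ depend only on $\be$, not on $x$, the matrix $T$ commutes with the operator $-\Delta+a(x)$ acting componentwise on $E^n$; hence, substituting $\phi=T\chi$ (equivalently $\chi=T^\top\phi\in E^n$), equation \eqref{eq:main-lin} becomes
\[
  (-\Delta+a(x))\chi = w^2\,T^{-1}C(\be)T\,\chi = w^2\,\diag\bigl(3,f(\be),\dots,f(\be)\bigr)\,\chi,
\]
i.e.\ the decoupled system $(-\Delta+a)\chi_1=3w^2\chi_1$ and $(-\Delta+a)\chi_j=f(\be)w^2\chi_j$ for $j=2,\dots,n$.

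For part a) recall that, as observed above, a bifurcation point must carry a nontrivial solution of \eqref{eq:main-lin}. By non-degeneracy of $w$, the number $3$ is not an eigenvalue of \eqref{eq:scalar-lin}, so the first decoupled equation forces $\chi_1=0$; a nontrivial solution thus requires $\chi_j\neq0$ for some $j\ge2$, which happens exactly when $f(\be)$ equals one of the eigenvalues $\lambda_k$ of \eqref{eq:scalar-lin}. For $\be\in(\overline{\be},\mu_1)$ we have $f(\be)>1=\lambda_1$ by Remark~\ref{rem:f}, so necessarily $k\ge2$, and then $\be=f^{-1}(\lambda_k)=\be_k$. For $\be\in(\mu_n,\infty)$ the same substitution applies (by Lemma~\ref{lem:c-eigen} the real symmetric matrix $C(\be)$ still has spectrum $\{3,f(\be)\}$), and since $g$ is negative there we get $f(\be)=1+2/g(\be)<1=\lambda_1$, so \eqref{eq:main-lin} has only the trivial solution and no bifurcation occurs. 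This proves a).

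For part b) fix $\be=\be_k$, so that $f(\be_k)=\lambda_k$. In the $\chi$-coordinates the decoupled system has solution set $\{\chi\in E^n:\chi_1=0,\ \chi_j\in V_k\ (j=2,\dots,n)\}=\{0\}\times V_k^{n-1}$, which has dimension $(n-1)n_k$; as $T$ is a linear isomorphism of $E^n$, the solution set of \eqref{eq:main-lin} has the same dimension. To obtain the explicit description, note first that $\be_k\neq0$ (since $f(0)=3\neq\lambda_k$), so by Lemma~\ref{lem:c-eigen}\,b) the eigenvalue $3$ of $C(\be_k)$ is simple; hence the first column of $T(\be_k)$, a unit eigenvector for this eigenvalue, is a nonzero multiple of $b_1(\be_k)$, and $\chi_1=\langle\text{first column of }T,\phi\rangle$ vanishes in $E$ if and only if $\sum_{j=1}^n\ga_j(\be)\phi_j=0$. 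Second, since the entries of $T$ and of $T^{-1}=T^\top$ are scalars, each component of $\chi=T^\top\phi$ lies in $V_k$ if and only if each component of $\phi=T\chi$ does (and $\chi_1=0\in V_k$ automatically). Combining these, $\phi$ solves \eqref{eq:main-lin} precisely when $\phi_j\in V_k$ for all $j$ and $\sum_j\ga_j(\be)\phi_j=0$, as claimed.

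Given $T(\be)$ the argument is essentially mechanical, so I do not anticipate a serious obstacle; the points requiring a little care are the passage between the $\chi$- and $\phi$-descriptions in b) --- where one uses both that $T$ is $x$-independent and that its first column is parallel to $b_1(\be)$, the latter resting on the simplicity of the eigenvalue $3$, i.e.\ on $\be_k\neq0$ --- and the separate, easy treatment of the range $\be>\mu_n$ in a).
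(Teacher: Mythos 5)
Your proof is correct and follows essentially the same route as the paper: conjugate by the constant orthogonal matrix $T(\be)$ to decouple \eqref{eq:main-lin} into scalar weighted eigenvalue problems, use non-degeneracy of $w$ to force the first transformed component to vanish, and identify $f(\be)$ with an eigenvalue $\lambda_k$, $k\ge2$. The only differences are cosmetic: you translate the $V_k$-membership back to the $\phi$-coordinates via invariance of $V_k^n$ under the constant invertible matrix $T$ rather than via the explicit eigenvectors $b_2,\dots,b_n$ as in the paper, and you spell out the easy range $\be>\mu_n$ which the paper leaves implicit.
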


\begin{proof}
a) Let $T(\be)\in SO(n)$ be as in Remark \ref{rem:T}. Then $\phi$ solves \eqref{eq:main-lin} if, and only if, $\psi=T(\be)^{-1}\phi$ solves
\begin{equation}
  \label{eq:lin-transf}
  \left\{
  \begin{aligned}
    -\Delta\psi_1+a\psi_1&=3w^2\psi_1\\
-\Delta\psi_j+a\psi_j&=f(\be)w^2\psi_j,\quad j=2,\ldots, n.
  \end{aligned}
\right.
\end{equation}
Since $w$ is non-degenerate this implies $\psi_1=0$, hence a nontrivial solution $\psi$ exists, if and only if $f(\be)=\lambda_k$ for some $k\geq 2$. (The case $k=1$, i. e. $\lambda_1=1$, $\be=\mu_1$ does not apply; see also Remark \ref{rem:mu1}.) If $f(\be)=\lambda_k$ then $\psi=(\psi_1,\ldots, \psi_n)^\top$ with
$\psi_1=0$, $\psi_2,\ldots, \psi_n\in V_k$, solves \eqref{eq:lin-transf}.

b) We fix $\be=f^{-1}(\lambda_k)$ and write $T=T(\be)=(T_1,\ldots,T_n)$. The transformations $\psi\mapsto T\psi$, $\phi\mapsto T^{-1}\phi$, are isomorphisms between the solutions of
\eqref{eq:lin-transf} and \eqref{eq:main-lin}. As a consequence of Lemma \ref{lem:c-eigen} we see that $T_1$ is a multiple of $b_1(\be)$, and
$\opspan\{T_2,\dots,T_n\} = b_1(\be)^\perp$. Thus $\phi$ solves \eqref{eq:main-lin} if, and only if, $\psi= T^{-1}\phi=T^\top\phi$ solves
\begin{equation}\label{eq:eigen1}
  \langle T_1,\phi\rangle=\psi_1=0,
  \text{ hence }0=\langle b_1(\be),\phi\rangle=\ga_1(\be)\phi_1+\ldots +\ga_n(\be)\phi_n
\end{equation}
and
\begin{equation}\label{eq:eigen2}
  \langle
  T_j,\phi\rangle=\psi_j\in V_k,\text
  { for }j=2,\ldots, n.
\end{equation}
Recall that the eigenvectors $b_2,\ldots, b_n$ from \eqref{eq:eigen} satisfy $\opspan\{b_2, \ldots, b_n\}=\opspan\{T_2,\ldots, T_n\}$. It follows that \eqref{eq:eigen2} is equivalent to
\begin{equation}\label{eq:eigen3}
  \langle b_j,\phi\rangle=\ga_j(\be)\phi_1-\ga_1(\be)\phi_j\in V_k
	\quad\text{for } j=2,\ldots, n.
\end{equation}
Now \eqref{eq:eigen1} and \eqref{eq:eigen3} are equivalent to
\[
 \phi_1,\ldots, \phi_n\in V_k\quad\text{ and }\quad
 \sum^n_{j=1}\ga_j(\be)\phi_j=0.
\]
\end{proof}

\begin{remark}\label{rem:mu1}
By our definition $\cT_w$ does not contain a point with the parameter value $\be=\mu_1$. If $\mu_1<\mu_n$ then it is not difficult to show that $(\mu_1,u_1,\ldots,u_n)\in\overline{\cT}_w$ implies that at least one $u_j=0$. If $\mu_1=\mu_n$ then $(\mu_1,(n\mu_1)^{-1/2}w,\ldots,(n\mu_1)^{-1/2}w)\in\overline{\cT}_w$. This is a bifurcation point where the bifurcating locked solutions are described in Proposition \ref{prop:locked}:
\[
  (\mu_1,\al_1w,\ldots, \al_nw)\text{ with }
  \al_j>0,\quad \sum^n_{j=1}\al_j^2=1/\be=1/\mu_1.
\]
\end{remark}


\section{Sufficient conditions for bifurcation}
\label{sec:bif-suf}

We first recall the variational structure of problem \eqref{eq:main}. Since $N\leq 3$ the functional
\begin{equation*}
\begin{aligned}
&J_\be(u_1,\ldots, u_n)\\
&\hspace*{1cm}
 =\frac{1}{2}\sum^n_{j=1}\int_\Om(|\nabla u_j|^2+a(x)u^2_j)
  -\frac{1}{4}\sum^n_{j=1}\int_\Om\mu_j u^4_j
  - \frac{\be}{2}\sum_{i< j}\int_\Om u^2_i u^2_j\\
&\hspace*{1cm}
 =\frac{1}{2}\sum^n_{j=1}\|u_j\|^2_E
  -\frac{1}{4}\sum^n_{j=1}\mu_j\|u_j\|^4_{L^4}
  -\frac{\be}{2}\sum_{i< j}\int_\Om u^2_i u^2_j
\end{aligned}
\end{equation*}
is well defined for $u_1,\ldots, u_n\in E$. It is well known that $J_\be:E^n\to\R$ is of class $C^2$, and critical points of $J_\be$ are (weak) solutions of \eqref{eq:main}.

For $\be\in(\overline{\be},\mu_1)\cup(\mu_n,\infty)$ let $m(\be)$ be the Morse index of $u(\be)=(\al_1(\be)w,\ldots,\al_n(\be)w)$ as critical point of $J_\be$. Recall the bifurcation parameters $\be_k$ from Proposition \ref{prop:bif-nec}.

\begin{Lemma}\label{lem:bif-suff}
For $k\geq 2$ there holds
\[
  m(\be_k-\eps)-m(\be_k+\eps)=(n-1)n_k\qquad\text{ for }\eps>0\text{ small.}
\]
\end{Lemma}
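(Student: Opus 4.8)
The plan is to compute the Morse index $m(\be)$ explicitly in terms of the eigenvalues $\lambda_k$ of the scalar problem \eqref{eq:scalar-lin}, using the diagonalization from Remark~\ref{rem:T}. The quadratic form $J_\be''(u(\be))$ acting on $\phi=(\phi_1,\dots,\phi_n)\in E^n$ is, after a routine computation,
\[
  J_\be''(u(\be))[\phi,\phi]
  = \sum_{j=1}^n\|\phi_j\|_E^2 - \int_\Om w^2\,\langle C(\be)\phi,\phi\rangle,
\]
with $C(\be)$ the matrix from Section~\ref{sec:bif-nec}. Applying the pointwise-in-$x$ orthogonal change of variables $\psi=T(\be)^{-1}\phi$ from Remark~\ref{rem:T} (which is an isometry of $E^n$ and diagonalizes $C(\be)$) decouples this form into
\[
  J_\be''(u(\be))[\phi,\phi]
  = \Bigl(\|\psi_1\|_E^2 - 3\int_\Om w^2\psi_1^2\Bigr)
    + \sum_{j=2}^n\Bigl(\|\psi_j\|_E^2 - f(\be)\int_\Om w^2\psi_j^2\Bigr).
\]
So the Morse index of $u(\be)$ is the negative index of the scalar quadratic form $q_3[\psi]=\|\psi\|_E^2-3\int_\Om w^2\psi^2$ plus $(n-1)$ times the negative index of $q_{f(\be)}[\psi]=\|\psi\|_E^2-f(\be)\int_\Om w^2\psi^2$. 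The first contribution is a fixed finite number (the number of eigenvalues $\lambda_k<3$), independent of $\be$.

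Next I would identify the negative index of $q_\mu[\psi]=\|\psi\|_E^2-\mu\int_\Om w^2\psi^2$ for a parameter $\mu>0$. By the spectral theory of the compact self-adjoint operator associated with $\psi\mapsto w^2\psi$ on $E$ (using assumption (A)), this index equals $\#\{k:\lambda_k<\mu\}$, counted with the multiplicities $n_k$ from \eqref{eq:kernel-scalar}; moreover for $\mu=\lambda_k$ exactly the nullity $n_k$ appears. Since $f:(\overline\be,\mu_1)\to(1,\infty)$ is a strictly decreasing diffeomorphism (Remark~\ref{rem:f}), as $\be$ increases through $\be_k=f^{-1}(\lambda_k)$ the value $f(\be)$ decreases through $\lambda_k$, so the eigenvalue $\lambda_k$ drops out of the count: the negative index of $q_{f(\be)}$ jumps down by exactly $n_k$. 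Hence
\[
  m(\be_k-\eps) - m(\be_k+\eps) = (n-1)\,n_k
\]
for $\eps>0$ small, since the $q_3$-contribution does not change (as $\lambda_k\ne3$ for all $k$, the value $3$ is never crossed), and no other $\lambda_\ell$ is crossed for $\eps$ small.

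The main technical point to get right is the justification that the negative index of $q_{f(\be)}$ is locally constant away from the $\be_k$ and jumps by precisely $n_k$ at $\be_k$ — i.e. that the form really picks up the full multiplicity and nothing spurious. This follows from writing $q_\mu$ via the compact operator $K_w:E\to E$, $\langle K_w u,v\rangle_E=\int_\Om w^2 uv$, so $q_\mu[\psi]=\langle(\mathrm{Id}-\mu K_w)\psi,\psi\rangle_E$; the negative eigenspace of $\mathrm{Id}-\mu K_w$ is spanned by the eigenfunctions of $K_w$ with eigenvalue $>1/\mu$, i.e. by the $V_\ell$ with $\lambda_\ell<\mu$, and $K_w$ has no eigenvalue accumulating at any $1/\lambda_k$ other than through the $\lambda_k$ themselves. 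A minor additional point is to confirm the isometry property of $\phi\mapsto T(\be)^{-1}\phi$ on $E^n$, which is immediate because $T(\be)\in SO(n)$ acts by a constant (in $x$) orthogonal matrix on the components and $\|\cdot\|_E$ is built from integrals that are invariant under such a rotation of the $\R^n$-valued function $\phi(x)$. Everything else is bookkeeping with the spectral decomposition.
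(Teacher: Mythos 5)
Your proof is correct, but it takes a different route from the paper. The paper argues locally at $\be_k$: it splits $E^n$ into the positive, null and negative eigenspaces of $Q_{\be_k}$, observes that the sign of $Q_\be$ persists on the positive and negative parts for $\be$ near $\be_k$, and then shows that the crossing form $Q'_{\be_k}(\phi)=-\int_\Om w^2\langle C'(\be_k)\phi,\phi\rangle=-f'(\be_k)\int_\Om w^2|\phi|^2$ is positive definite on the $(n-1)n_k$-dimensional kernel, using the smooth diagonalization $T(\be)$ only to compute $\langle C'(\be)u,u\rangle=f'(\be)|u|^2$ on $b_1(\be)^\perp$. You instead use $T(\be)$ at each fixed $\be$ to decouple the whole Hessian into one copy of $q_3$ and $n-1$ copies of $q_{f(\be)}$, and then read off the explicit formula
\[
  m(\be)=\sum_{\lambda_\ell<3}n_\ell+(n-1)\sum_{\lambda_\ell<f(\be)}n_\ell ,
\]
so that the jump $(n-1)n_k$ at $\be_k=f^{-1}(\lambda_k)$ is immediate from the strict monotonicity of $f$ and the nondegeneracy $\lambda_\ell\neq3$. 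Your version is more elementary (no need for smoothness of $\be\mapsto T(\be)$ or for the computation of $C'(\be)$) and yields strictly more, namely the value of $m(\be)$ on each interval between consecutive $\be_k$'s, not just its jumps; the price is that it relies on the global diagonalizability of $C(\be)$ by a constant-in-$x$ orthogonal matrix, which is special to this system, whereas the paper's crossing-form argument is the one that generalizes when no such explicit decoupling is available. All the points you flag as needing justification (isometry of the pointwise rotation, identification of the index of $q_\mu$ with $\sum_{\lambda_\ell<\mu}n_\ell$ via the compact operator $K_w$, and the fact that only $\lambda_k$ is crossed for small $\eps$) are indeed the right ones and are handled correctly.
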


\begin{proof}
Consider the quadratic form on $E^n$ given by
\begin{equation*}
\begin{aligned}
Q_\be(\phi)
 &=\sum^n_{j=1}\int_\Om(|\nabla\phi_j|^2+a\phi^2_j)
    -\int_\Om w^2\langle C(\be)\phi,\phi\rangle\\
 &=\|\phi\|^2_{E^n}-\int_\Om w^2\langle C(\be)\phi,\phi\rangle\\
\end{aligned}
\end{equation*}
where $C(\be)$ is as in \eqref{eq:main-lin}. Then $m(\be)$ is precisely the index of $Q_\be$. Let $E^n=V^+_{\be_k}\oplus V^0_{\be_k}\oplus V^-_{\be_k}$ be the decomposition of $E^n$ into the positive eigenspace $V^+_{\be_k}$ of $Q_{\be_k}$, the negative eigenspace $V^-_{\be_k}$ of $Q_{\be_k}$, and the kernel
\begin{equation}\label{eq:kernel}
 V^0_{\be_k}=\left\{\phi\in E^n:\phi_j\in V_k \text{ for }
              j=1,\ldots,n,\ \sum^n_{j=1}\ga_j(\be)\phi_j=0\right\}.
\end{equation}
The derivative $Q_\be'=\frac{\pa}{\pa\be}Q_\be$ of $Q_\be$ with respect to $\be$ is given by
\[
  Q'_\be(\phi)=-\int_\Om w^2\langle C'(\be) \phi,\phi\rangle.
\]
Recall that $C(\be)$ is a smooth function of $\be$. Now we have
\[
Q_\be=Q_{\be_k}+(\be-\be_k)Q'_{\be_k}+o(|\be-\be_k|)\text{ as } \be\to \be_k.
\]
This implies that $Q_\be>0$ on $V^+_{\be_k}$ and $Q_\be<0$ on $V^-_{\be_k}$, if $\be$ is close to $\be_k$. Thus the lemma follows if $Q'_{\be_k}$ is positive on $V^0_{\be_k}$. In order to prove this let $T(\be)\in SO(n)$ be as in \ref{rem:T}, that is, $T$ depends smoothly on $\be$, and satisfies
\[
  T(\be)^{-1}C(\be)T(\be)=\diag (3,f(\be),\ldots, f(\be))=:C_T(\be).
\]
It follows that $C(\be)T(\be)=T(\be) C_T(\be)$, hence
\begin{equation*}
C'(\be)
 =T'(\be)C_T(\be)T(\be)^{-1}+T(\be)C'_T (\be)T(\be)^{-1}-C(\be)T'(\be)T(\be)^{-1}
\end{equation*}
For an eigenvector $u\in b_1(\be)^\perp\subset\R^n$ of $C(\be)$ corresponding to the eigenvalue $f(\be)$ we compute
\[
T'(\be)C_T(\be)T(\be)^{-1}u=f(\be)T'(\be)T(\be)^{-1}u,
\]
and
\[
T(\be)C'_T(\be)T(\be)^{-1} u=f'(\be) u,
\]
and
\[
\begin{aligned}
\langle C(\be)T'(\be)T(\be)^{-1}u,u\rangle
 &=\langle T'(\be)T(\be)^{-1}u,C(\be)u\rangle\\
 &=f(\be)\langle T'(\be)T(\be)^{-1}u,u\rangle.
\end{aligned}
\]
This implies
\[
\langle C'(\be)u,u\rangle = f'(\be)|u|^2.
\]

According to Proposition \ref{prop:bif-nec} a function $\phi\in V^0_{\be_k}$ satisfies
\[
\sum^n_{j=1}\ga_j(\be_k)\phi_j=0, \text{ i.~e.\ }
  \phi(x)\in b_1(\be_k)^\perp\subset\R^n\text{ for every }x\in\Om.
\]
Here we used Proposition \ref{prop:bif-nec} b). Consequently, for $\phi\in V^0_{\be_k}$ we have
\[
Q'_{\be_k}(\phi)
 =-\int_\Om w^2\langle C'(\be_k)\phi,\phi\rangle
 =-f'(\be_k)\int_\Om |\phi|^2 w^2
 >0
\]
because $f'(\be_k)<0$ by Remark \ref{rem:f}. Therefore $Q'_{\be_k}$ is positive definite on $V_{\be_k}^0$, as required.
\end{proof}

\begin{Theorem}
  For every $k\geq 2$, the point $(\be_k, u(\be_k))\in\cT_w$ is a bifurcation point of solutions of \eqref{eq:main}.
\end{Theorem}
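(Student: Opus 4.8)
\emph{Approach.} The plan is to read off the theorem from a variational bifurcation principle, feeding in the Morse index jump of Lemma~\ref{lem:bif-suff} as the only nontrivial ingredient. First I would move the (parameter dependent) critical point to the origin: set $\tilde J_\be(v):=J_\be(u(\be)+v)$ for $v\in E^n$. Since $u(\be)$ depends smoothly on $\be\in(\overline{\be},\mu_1)$ and $J_\be$ is $C^2$ and polynomial in $\be$, the map $(\be,v)\mapsto\tilde J_\be(v)$ is $C^2$, $v=0$ is a critical point of $\tilde J_\be$ for every $\be$, and $(\be_k,u(\be_k))$ is a bifurcation point of \eqref{eq:main} if and only if $(\be_k,0)$ is one for $\tilde J_\be'(v)=0$. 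The Hessian $L_\be:=\tilde J_\be''(0)$ is the bounded self-adjoint operator on $E^n$ represented by the quadratic form $Q_\be$ of Lemma~\ref{lem:bif-suff}. Because $w\in E\hookrightarrow L^4(\Om)$ gives $w^2\in L^2(\Om)$ and $E^n\hookrightarrow L^4(\Om)^n$ is compact by~(A), the symmetric bilinear form $(\phi,\eta)\mapsto\int_\Om w^2\langle C(\be)\phi,\eta\rangle$ is represented by a compact operator, so $L_\be=\mathrm{Id}-K_\be$ with $K_\be$ compact self-adjoint; in particular $L_\be$ is Fredholm of index $0$ with finite Morse index $m(\be)$. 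By Proposition~\ref{prop:bif-nec} and Remark~\ref{rem:f}, $\ker L_\be\neq\{0\}$ exactly when $\be\in\{\be_2,\be_3,\dots\}$, a discrete subset of $(\overline{\be},\mu_1)$; hence $L_\be$ is invertible for $0<|\be-\be_k|<\delta$ with $\delta>0$ small, and $m(\be)$ is constant on each of $(\be_k-\delta,\be_k)$ and $(\be_k,\be_k+\delta)$.

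\emph{Main step.} Next I would invoke the variational bifurcation theorem: for a $C^2$ family $\tilde J_\be$ with a common critical point $0$ whose Hessian is Fredholm of index $0$ and invertible for $0<|\be-\be_k|<\delta$, any change of the Morse index across $\be_k$ forces $(\be_k,0)$ to be a bifurcation point. By Lemma~\ref{lem:bif-suff} the jump equals $m(\be_k-\eps)-m(\be_k+\eps)=(n-1)n_k\ge 1$, so the hypothesis is satisfied and the theorem follows. This principle is classical (it goes back to B\"ohme and Marino, see also Rabinowitz), and in exactly this situation it is used in \cite{bartsch-dancer-wang:2010}.

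\emph{The obstacle.} The delicate point, and the reason this is not a one-line consequence of degree theory, is that $(n-1)n_k$ need not be odd; indeed it is always even when $n$ is odd, the very phenomenon stressed in the introduction, so the Leray--Schauder degree of $L_\be$ does not change across $\be_k$ and detects nothing. One must instead argue with critical groups. If $(\be_k,0)$ were not a bifurcation point, there would be a ball $B_r\subset E^n$ and $\delta>0$ such that $0$ is the only critical point of $\tilde J_\be$ in $B_r$ for all $\be\in(\be_k-\delta,\be_k+\delta)$; a deformation argument, using a Gromoll--Meyer pair depending continuously on $\be$, then shows that the critical groups $C_q(\tilde J_\be,0)$ are independent of $\be$ on this interval. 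But for $\be\neq\be_k$ the critical point $0$ is nondegenerate of finite Morse index $m(\be)$, so $C_q(\tilde J_\be,0)\cong\mathbb{F}$ for $q=m(\be)$ and $0$ otherwise; comparing $\be=\be_k-\delta/2$ with $\be=\be_k+\delta/2$ contradicts $m(\be_k-\delta/2)\neq m(\be_k+\delta/2)$. This argument yields only the local statement claimed here; the global bifurcation announced in the introduction requires the additional hidden-symmetry structure, which is not used in the present proof.
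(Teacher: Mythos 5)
Your overall route is the paper's own: translate the branch to the origin, observe that the Hessian is identity minus compact and hence Fredholm of index zero with isolated degeneracy, and invoke the variational (Krasnoselski/B\"ohme--Marino type) bifurcation theorem driven by the Morse index jump of Lemma~\ref{lem:bif-suff}; the paper cites exactly this, namely Krasnoselski \cite{Krasnoselski:1964} in the version of \cite[Theorem 8.9]{Mawhin-Willem:1989}, and your critical-groups sketch is a reasonable justification of that classical principle. You are also right that degree theory is useless here because $(n-1)n_k$ may be even.

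There is, however, one genuine gap: positivity of the bifurcating solutions. Problem \eqref{eq:main} includes the constraint $u_j>0$, and your argument only produces critical points of $J_\be$ in a small $E^n$-neighbourhood of $u(\be_k)$. Since $J_\be$ is even in each component, its critical points solve the differential equations with no sign information, and closeness to $u(\be_k)$ in the $H^1$-type norm of $E^n$ does not give pointwise positivity --- in particular not on unbounded domains such as $\Om=\R^N$, where $w$ decays at infinity, so even smallness in $L^\infty$ would not force $u_j>0$ everywhere. This is precisely the point the paper flags (``care has to be taken in order to show that the bifurcating solutions are actually positive'') and resolves either by the argument of \cite[pp.~354--355]{bartsch-dancer-wang:2010} or by applying \cite[Corollary~1.4, Theorem~1.5]{Bartsch-Dancer:2010} to obtain solutions bifurcating into the positive cone of $E^n$, after which the strong maximum principle upgrades $u_j\ge0$, $u_j\not\equiv0$ to $u_j>0$. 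Without some such supplement your proof establishes bifurcation of solutions of the system of equations, but not of solutions of \eqref{eq:main} as stated.
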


\begin{proof}
We can apply a classical bifurcation theorem of Krasnoselski \cite{Krasnoselski:1964} in the version of \cite[Theorem 8.9]{Mawhin-Willem:1989}. Care has to be taken in order to show that the bifurcating solutions are actually positive. For this one may argue as in \cite[pp.~354-355]{bartsch-dancer-wang:2010}. Alternatively one can use \cite[Corollary~1.4,~Theorem~1.5]{Bartsch-Dancer:2010} in order to prove the existence of solutions bifurcating into the positive cone of $E^n$. Both arguments yield that the bifurcating solutions $(\be,u_1,\dots,u_n)$ satisfy $u_j>0$ for $j=1,\dots,n$.
\end{proof}

\begin{remark}
a)  If $(n-1)n_k$ is odd then there exists a connected set
$\cS_k\subset\R\times E^n\setminus\cT_w$ of solutions $(\be, u)$ of \eqref{eq:main}
such that $(\be_k,u(\be_k))\in\overline{\mathcal{S}}_k$. This can be proved using
degree theory in the spirit of Rabinowitz' global bifurcation theorem. Whether
$\mathcal{S}_k$ is unbounded in the $\be$-direction, or in the $E^n$-direction, or
whether it returns to $\mathcal{T}_w$ is unclear in general.

b)  The case $n=2$ has been treated in \cite{bartsch-dancer-wang:2010}.
Clearly, if $n$ is odd degree theoretic methods do not apply. On the other hand, one
may hope for an $(n-1)n_k$-dimensional manifold of bifurcating solutions, or multiple
bifurcating branches. In many cases, a high dimensional kernel is not generic but
forced by some symmetry. In the next section we present some hidden symmetry, which
is not inherent to the full functional, but to a special type of solutions. In this
way we obtain multiple bifurcating branches.
\end{remark}


\section{Branches of partially locked solutions}
\label{sec:ampli-sol}

In this section we investigate partially locked solutions of \eqref{eq:main},
i.~e.\ solutions $(\be,u)$ where $u_i/u_j$ is constant for some, but not all
$i\neq j$. If $u_i/u_j$ is constant we say that $u_i$ and $u_j$ are locked.

\begin{Lemma}\label{lem:locked1}
  If $(\be,u)$ solves \eqref{eq:main} and $u_i,
  u_j$ are locked then
\[
u_j = \left(\frac{\mu_i-\be}{\mu_j-\be}\right)^{1/2} u_i
 = \frac{\ga_j(\be)}{\ga_i(\be)}\ u_i\ .
\]
\end{Lemma}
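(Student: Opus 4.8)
The plan is to use lockedness to collapse the $i$-th and $j$-th equations of \eqref{eq:main} into a single algebraic identity for the proportionality constant.

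First I would write $u_j = c\,u_i$ for a constant $c>0$. This is legitimate: the hypothesis says $u_i/u_j$ is constant, and since $u_i,u_j>0$ on $\Om$ this constant is a positive number, so we may equivalently write $u_j=c\,u_i$ with $c>0$. (If $i=j$ there is nothing to prove, so assume $i\ne j$.)

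Next I would substitute $u_j=c\,u_i$ into the $j$-th equation of \eqref{eq:main} and use the $i$-th equation to rewrite $(-\Delta+a)u_i$. The left-hand side of the $j$-th equation becomes
\[
  c\,(-\Delta+a)u_i
  = c\Bigl(\mu_i u_i^3+\be\,u_i\sum_{k\ne i}u_k^2\Bigr).
\]
Writing $S:=\sum_{k\ne i,j}u_k^2$, we have $\sum_{k\ne i}u_k^2 = S + u_j^2 = S + c^2u_i^2$ and $\sum_{k\ne j}u_k^2 = S + u_i^2$, so the $j$-th equation reads
\[
  c\mu_i u_i^3 + c^3\be\,u_i^3 + c\be\,u_i S
   = \mu_j c^3 u_i^3 + \be c\,u_i^3 + \be c\,u_i S.
\]
The term $c\be\,u_i S$ cancels, and dividing by $c\,u_i^3$ (admissible since $c>0$ and $u_i>0$) leaves the scalar identity $\mu_i+c^2\be = \mu_j c^2+\be$, that is, $\mu_i-\be = c^2(\mu_j-\be)$. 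Hence $c^2 = (\mu_i-\be)/(\mu_j-\be)$, and taking the positive root (as $c>0$) gives $c=\bigl((\mu_i-\be)/(\mu_j-\be)\bigr)^{1/2} = \ga_j(\be)/\ga_i(\be)$, which is the claim.

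There is essentially no obstacle: the argument is a direct substitution followed by one cancellation. The only point worth a remark is consistency of the square root, namely that $(\mu_i-\be)/(\mu_j-\be)$ is automatically positive because it equals $c^2>0$; in particular a locked pair $u_i,u_j$ cannot occur at a parameter $\be$ lying strictly between $\mu_i$ and $\mu_j$, and the degenerate value $\be=\mu_j$ (where $\ga_j(\be)$ is not defined) would force $\be=\mu_i$ as well and does not arise in the situations where the lemma is applied.
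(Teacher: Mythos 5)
Your proposal is correct and follows essentially the same route as the paper: substitute $u_j=c\,u_i$ into the two relevant equations of \eqref{eq:main}, cancel the common cross-terms $\be\,u_i\sum_{k\ne i,j}u_k^2$, and compare the coefficients of $u_i^3$ to obtain $\mu_i-\be=c^2(\mu_j-\be)$. The closing remark on the automatic positivity of $(\mu_i-\be)/(\mu_j-\be)$ is a correct (and welcome) extra observation, not a deviation in method.
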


\begin{proof}
We set $u_j=\ga u_i$ for some $\ga>0$. Then the
equations \eqref{eq:main} for $u_i$ and $u_j$ lead to
\[
 -\Delta u_i+au_i=(\mu_i+\be\ga^2)u^3_i+\be\sum_{k\ne i,j}u^2_k u_i
\]
and, respectively, to
\[
  -\Delta u_i+au_i=(\mu_j\ga^2+\be) u^3_i+\be\sum^n_{k\ne i,j}u^2_ku_i
\]
It follows that
\[
\ga = \left(\frac{\mu_i-\be}{\mu_j-\be}\right)^{1/2}
 = \frac{\ga_j(\be)}{\ga_i(\be)}.
\]
\end{proof}

The next result reveals some hidden symmetry in the problem which will
allow a dimension reduction argument below. It is simple to prove but
has not been observed before.

\begin{Lemma}\label{lem:locked2}
Suppose $(\be, u)$ solves \eqref{eq:main} for some $i\in\{1,\ldots,n\}$,
and suppose
\[
u_j = \left(\frac{\mu_i-\be}{\mu_j-\be}\right)^{1/2} u_i
 = \frac{\ga_j(\be)}{\ga_i(\be)}\ u_i\ .
\]
Then \eqref{eq:main} also holds for $u_j$.
\end{Lemma}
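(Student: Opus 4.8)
The plan is to verify the $j$-th equation of \eqref{eq:main} by a direct substitution; in fact I will only use that the $i$-th equation of \eqref{eq:main} holds together with the locking relation $u_j=(\ga_j(\be)/\ga_i(\be))u_i$, so the argument is essentially the computation in the proof of Lemma~\ref{lem:locked1} read backwards. Write $\ga:=\ga_j(\be)/\ga_i(\be)=\big((\mu_i-\be)/(\mu_j-\be)\big)^{1/2}$, so the hypothesis is $u_j=\ga u_i$, equivalently $\ga^2(\mu_j-\be)=\mu_i-\be$. First I would record the algebraic identity that makes everything collapse, namely
\[
\mu_i+\be\ga^2 \;=\; \mu_j\ga^2+\be \;=\; \frac{\mu_i\mu_j-\be^2}{\mu_j-\be},
\]
which follows immediately from $\ga^2(\mu_j-\be)=\mu_i-\be$. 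Conceptually this says that the effective cubic coefficient felt by $u_i$ once it absorbs its coupling with $u_j$ is the same as the one felt by $u_j$ once it absorbs its coupling with $u_i$.

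Next I would substitute $u_j=\ga u_i$ into the $j$-th equation of \eqref{eq:main}, divide by $\ga>0$, and isolate the $k=i$ term by writing $\sum_{k\ne j}u_k^2=u_i^2+\sum_{k\ne i,j}u_k^2$; the $j$-th equation then reads
\[
-\Delta u_i+a(x)u_i=(\mu_j\ga^2+\be)\,u_i^3+\be\sum_{k\ne i,j}u_k^2\,u_i.
\]
On the other hand, the $i$-th equation of \eqref{eq:main}, which holds by assumption, becomes — after isolating the $k=j$ term via $\sum_{k\ne i}u_k^2=\ga^2u_i^2+\sum_{k\ne i,j}u_k^2$ —
\[
-\Delta u_i+a(x)u_i=(\mu_i+\be\ga^2)\,u_i^3+\be\sum_{k\ne i,j}u_k^2\,u_i.
\]
By the identity above the two right-hand sides coincide, so the first relation holds, i.e.\ the $j$-th equation of \eqref{eq:main} is satisfied. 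Finally $u_j=\ga u_i\in E$ and $u_j>0$, since $\ga>0$ and $u_i\in E$, $u_i>0$.

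I do not expect a genuine obstacle. The only points needing a little care are the bookkeeping of which terms of $\sum_{k\ne i}u_k^2$ and $\sum_{k\ne j}u_k^2$ are rewritten through the locking relation, and the observation that $\ga$ is a well-defined positive number, which holds in the parameter ranges $\be\in(\overline{\be},\mu_1)\cup(\mu_n,\infty)$ relevant to the paper (there $\mu_i-\be$ and $\mu_j-\be$ have the same sign). The content of the lemma is structural rather than computational: it says that on the linear subspace of $E^n$ cut out by a locking relation the $i$-th and $j$-th equations of \eqref{eq:main} become redundant, and this redundancy is exactly the "hidden symmetry" that powers the dimension-reduction argument in the rest of the section.
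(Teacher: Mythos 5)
Your argument is correct and is essentially the paper's own proof: both rest on the identity $\mu_i+\be\ga^2=\mu_j\ga^2+\be$ (equivalent to $\ga^2(\mu_j-\be)=\mu_i-\be$) and on splitting off the $k=i$ resp.\ $k=j$ term from the coupling sums; the paper just multiplies the $i$-th equation by $\ga$ to produce the $j$-th, whereas you divide the $j$-th by $\ga$ and match it against the $i$-th, which is the same computation read in the opposite direction. No gap.
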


\begin{proof}
We set $\ga:=\frac{\ga_j(\be)}{\ga_i(\be)}$, so $u_j=\ga u_i$.
Multiplying the equation for $u_i$
\[
-\Delta u_i+au_i
 =\mu_i\mu^3_i+\be\sum_{k\ne i}u^2_k  u_i
 =(\mu_i+\be\ga^2)u^3_i+\be\sum_{k\ne i,j}u^2_k u_i
\]
by $\ga$ yields
\begin{equation*}
\begin{aligned}
-\Delta u_j+au_j
 &=(\mu_i+\be\ga^2)\ga u^3_i+\be\sum_{k\ne i,j} u^2_k u_j
  =(\mu_j\ga^2+\be)\ga u^3_i+\be\sum_{k\ne i,j} u^2_k u_j\\
{}&=\mu_j u^3_j+\be\sum_{k\neq j}u^2_ku_j.
\end{aligned}
\end{equation*}
This is equation \eqref{eq:main} for $u_j$ as claimed.
\end{proof}

Let $\cP=\{P_1,\ldots,P_m\}$ be a partition of $\{1,\ldots, n\}$,
i.~e.\ $\{1,\ldots, n\}=P_1\dot\cup\ldots\dot\cup P_m$ is a disjoint union and
$P_1,\ldots, P_m\neq\emptyset$. We write $|\cP|:=m$ for the cardinality
of $\cP$. For $\be<\mu_1$ we define
\[
 X^\cP_\be:=\left\{u\in E^n\mid \forall k=1,\ldots, m\ \forall\  i,j\in\cP_k:
               u_j=\frac{\ga_j(\be)}{\ga_i(\be)}u_i\right\}.
\]
A solution $u\in X^\cP_\be$ of \eqref{eq:main} will be called
$\cP$-locked.

\begin{proposition}\label{prop:reduction}
If $u\in X^\cP_\be$ is a critical point of $J_\be|X^\cP_\be$ then $u$ is
a critical point of $J_\be$, hence a solution of \eqref{eq:main}.
\end{proposition}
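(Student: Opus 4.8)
The plan is to exploit the fact that $X^\cP_\be$ is a linear subspace of $E^n$ and that the functional $J_\be$ is invariant under a suitable finite group of linear symmetries having $X^\cP_\be$ as its fixed-point set; then the principle of symmetric criticality of Palais applies. First I would observe that for fixed $\be<\mu_1$ the set $X^\cP_\be$ is a closed linear subspace of $E^n$: it is cut out by the linear equations $\ga_i(\be)u_j=\ga_j(\be)u_i$ for $i,j$ in a common block $P_k$. Concretely, $X^\cP_\be$ is isomorphic to $E^m$ via $(v_1,\dots,v_m)\mapsto u$ with $u_j=\ga_j(\be)v_k$ whenever $j\in P_k$, up to normalization. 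A critical point of $J_\be|X^\cP_\be$ is then a $u\in X^\cP_\be$ with $\langle J_\be'(u),h\rangle=0$ for all $h\in X^\cP_\be$; we must upgrade this to $\langle J_\be'(u),h\rangle=0$ for \emph{all} $h\in E^n$.

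The key step is to produce the symmetry group. For each block $P_k$ and each pair $i,j\in P_k$, consider the linear map $\sigma_{ij}\colon E^n\to E^n$ that swaps the (rescaled) $i$-th and $j$-th components: in coordinates, $\sigma_{ij}$ sends $u$ to the tuple with $i$-th entry $\frac{\ga_i(\be)}{\ga_j(\be)}u_j$, $j$-th entry $\frac{\ga_j(\be)}{\ga_i(\be)}u_i$, and all other entries unchanged. A direct inspection of the formula for $J_\be$ shows it is invariant under $\sigma_{ij}$: the quadratic term $\|u_i\|_E^2+\|u_j\|_E^2$ is symmetric in the two rescaled slots only after accounting for the factor $(\ga_i/\ga_j)^2$, which is exactly $\frac{\mu_j-\be}{\mu_i-\be}$, and this is precisely the ratio appearing in Lemma \ref{lem:locked1}; the quartic self-interaction terms $\mu_i\|u_i\|_{L^4}^4$ transform with the fourth power of the scaling, again matching; and the cross terms $\be\int u_p^2u_q^2$ are permuted among themselves. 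Actually the cleanest route is to verify invariance of $J_\be$ under the full group $G_\cP$ generated by all such $\sigma_{ij}$, $i,j$ in a common block — this is a finite group (a product of symmetric groups on the blocks, acting by these rescaled permutations). Then one checks that the fixed-point set $(E^n)^{G_\cP}$ equals exactly $X^\cP_\be$: invariance under every transposition within a block forces $\ga_i(\be)u_j=\ga_j(\be)u_i$, and conversely these relations make $u$ fixed.

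With these two facts in hand — $X^\cP_\be=(E^n)^{G_\cP}$ and $J_\be$ is $G_\cP$-invariant — the principle of symmetric criticality (Palais) immediately gives that any critical point of the restriction $J_\be|X^\cP_\be$ is a critical point of $J_\be$ on all of $E^n$, hence a weak solution of \eqref{eq:main}. (Alternatively, one can bypass the abstract principle and argue by hand: given $h\in E^n$, decompose $E^n=X^\cP_\be\oplus Y$ where $Y$ is the $G_\cP$-invariant complement, i.e.\ the sum of the nontrivial isotypical pieces; by $G_\cP$-invariance of $J_\be$, the gradient $J_\be'(u)$ at a fixed point $u$ lies in $X^\cP_\be$, so it is orthogonal to $Y$; combined with $\langle J_\be'(u),\cdot\rangle=0$ on $X^\cP_\be$ this gives $J_\be'(u)=0$. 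This decomposition argument is really just a proof of symmetric criticality in the present Hilbert-space setting, and it connects directly to Lemmas \ref{lem:locked1} and \ref{lem:locked2}, which already show that the Euler--Lagrange equations are compatible with the locking relations.) The main obstacle I anticipate is purely computational bookkeeping: checking carefully that $J_\be$ is invariant under the rescaled transpositions $\sigma_{ij}$, since the scaling factors $\ga_j(\be)/\ga_i(\be)$ must interact correctly with all three types of terms in $J_\be$ simultaneously; once that verification is done the rest is formal.
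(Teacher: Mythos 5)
Your proof has a genuine gap: the functional $J_\be$ is \emph{not} invariant under the rescaled transpositions $\sigma_{ij}$, so the symmetric-criticality argument collapses at exactly the step you flagged as "computational bookkeeping". Write $c=\ga_j(\be)/\ga_i(\be)=\bigl((\mu_i-\be)/(\mu_j-\be)\bigr)^{1/2}$, so that $\sigma_{ij}$ sends $(u_i,u_j)$ to $(c^{-1}u_j,\,c\,u_i)$. Then the quadratic part transforms as
\[
\|u_i\|_E^2+\|u_j\|_E^2\ \longmapsto\ c^{-2}\|u_j\|_E^2+c^{2}\|u_i\|_E^2,
\]
which equals the original for all $u$ only if $c=1$, i.e.\ $\mu_i=\mu_j$; the quartic self-interaction terms would require in addition $\mu_j c^4=\mu_i$, i.e.\ $(\mu_i-\be)^2/(\mu_j-\be)^2=\mu_i/\mu_j$, which fails for generic $\mu_i\neq\mu_j$; and the cross terms $\int u_i^2u_k^2+\int u_j^2u_k^2$ with $k\notin\{i,j\}$ also pick up factors $c^{\mp2}$. (A concrete counterexample: $n=2$, $\be=0$, $\mu_1=1$, $\mu_2=2$, $u=(0,v)$ gives $J_0(u)=\tfrac12\|v\|_E^2-\tfrac12\|v\|_{L^4}^4$ but $J_0(\sigma_{12}u)=\|v\|_E^2-\|v\|_{L^4}^4$.) This is not an accident: the paper explicitly remarks that the hidden symmetry "is not inherent to the full functional, but to a special type of solutions." Since there is no group under which $J_\be$ is invariant with fixed-point set $X^\cP_\be$, Palais' principle does not apply, and your fallback decomposition argument fails for the same reason (it needs $J_\be'(u)\in X^\cP_\be$ at fixed points, which is precisely the invariance you cannot establish).

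The argument that does work is more elementary and is the one the paper uses. Criticality of $J_\be|X^\cP_\be$ at $u=i^\cP_\be(u_1,\dots,u_m)$ means that for each block $P_i$ the weighted combination $\sum_{j\in P_i}\frac{\ga_j(\be)}{\ga_i(\be)}R_j(u)$ of the residuals $R_j(u)=-\Delta u_j+au_j-\mu_ju_j^3-\be\sum_{k\ne j}u_k^2u_j$ vanishes weakly. The point of Lemma \ref{lem:locked2} is that on $X^\cP_\be$ the individual residuals within a block are \emph{proportional}: $R_j(u)=\frac{\ga_j(\be)}{\ga_i(\be)}R_i(u)$ for $j\in P_i$, because the scaling $c$ is chosen precisely so that $\mu_i+\be c^2=\mu_jc^2+\be$ (times $c^{-2}$, suitably arranged). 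Hence the vanishing of the weighted sum, whose coefficients $\ga_j^2/\ga_i^2$ are positive, forces $R_i(u)=0$ and therefore $R_j(u)=0$ for every $j\in P_i$. This gives all $n$ equations, i.e.\ $J_\be'(u)=0$. So the "hidden symmetry" is a symmetry of the Euler--Lagrange system restricted to locked configurations, not of $J_\be$ itself, and the proof must be run at the level of the equations rather than the functional.
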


\begin{proof}
We may assume $u_i\in P_i$ for $i=1,\ldots, m$. That $u\in X^\cP_\be$ is a
critical point of $J_\be|X^\cP_\be$ is equivalent to $u_1,\ldots, u_m$ being
solutions of \eqref{eq:main} with $u_j=\frac{\ga_j(\be)}{\ga_i(\be)}u_i$
$j\in P_i$, $i=1,\dots,m$. Lemma \ref{lem:locked2} implies that
$u=(u_1,\ldots,u_n)$ solves \eqref{eq:main}, hence $u$ is a critical point
of $J_\be$.
\end{proof}

One can now use Proposition \ref{prop:reduction} to find bifurcations
of partially locked solutions in $X^\cP_\be$. We may assume that
$i\in P_i$ for $i=1,\ldots, m$. Define
\[
 i^\cP_\be:E^m\to X^\cP_\be,\quad (u_1,\ldots,u_m)\mapsto(u_1,\ldots, u_n),
\]
by
\[
  u_j=\frac{\ga_j(\be)}{\ga_i(\be)} u_i \quad
  \text{if } j\in P_i,\ i=1,\ldots, m.
\]
Let $\pi: E^n\to E^m$ be the projection onto the first $m$ components,
so that $\pi\circ i_\be^\cP$ is the identity on $E^m$ for every
$\be<\mu_1$. Finally we define
\[
  J^\cP_\be:= J\circ i^\cP_\be : E^m\to\R
\]
for $\be\in\R$. A critical point $u\in E^m$ of $J^\cP_\be$ yields a critical
point $i^\cP_\be(u)\in X^\cP_\be$ of $J_\be|X^\cP_\be$, hence a $\cP$-locked
solution of \eqref{eq:main} as a consequence of
Proposition \ref{prop:reduction}. The branch
\[
\cT^\cP_w=\{(\be,\pi u):(\be,u)\in\cT_w\}=\{(\be, u):(\be,i^\cP_\be(u))\in\cT_w\}
\]
is a branch of critical points corresponding to the branch $\cT_w$ of
locked solutions. \\

Let $m^\cP(\be)$ be the Morse index of
$\pi u(\be)=(\al_1(\be)w,\ldots,\al_m(\be)w)$
as critical point of $J^\cP_\be$. Bifurcation of
$\cP$-locked solutions occurs if $m^\cP(\be)$ changes.

\begin{Lemma}\label{lem:bif-locked-suff}
For $k\geq 2$ there holds
\[
 m^\cP(\be_k-\epsilon)-m^\cP(\be_k+\epsilon)=(|\cP|-1)n_k
\qquad\text{for }\eps>0\text{ small.}
\]
\end{Lemma}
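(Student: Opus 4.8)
The plan is to mimic the proof of Lemma~\ref{lem:bif-suff}, but now working on the reduced space $E^m$ with the functional $J^\cP_\be$ instead of on $E^n$ with $J_\be$. First I would compute the Hessian of $J^\cP_\be$ at the reduced locked solution $\pi u(\be)=(\al_1(\be)w,\dots,\al_m(\be)w)$. Since $J^\cP_\be=J_\be\circ i^\cP_\be$ and $i^\cP_\be$ is linear in $u$, the second derivative is obtained by composing $Q_\be$ with the linear map $i^\cP_\be$; concretely, writing $\phi=(\phi_1,\dots,\phi_m)\in E^m$ and $\Phi=i^\cP_\be(\phi)\in E^n$ (so $\Phi_j=\frac{\ga_j(\be)}{\ga_i(\be)}\phi_i$ for $j\in P_i$), the reduced quadratic form is
\[
 Q^\cP_\be(\phi)=Q_\be\big(i^\cP_\be(\phi)\big)
   =\|\Phi\|^2_{E^n}-\int_\Om w^2\langle C(\be)\Phi,\Phi\rangle.
\]
Thus $m^\cP(\be)$ is the Morse index of $Q^\cP_\be$ on $E^m$.

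Next I would identify the kernel of $Q^\cP_{\be_k}$ and show the rest of the spectrum stays bounded away from $0$ near $\be_k$. By Proposition~\ref{prop:bif-nec}~b) applied to the subsystem, a reduced kernel element $\phi\in E^m$ corresponds to $\Phi\in V^0_{\be_k}$, i.e.\ each component in $V_k$ and $\Phi\perp b_1(\be_k)$; translated back to the reduced variables this forces $\phi_i\in V_k$ for all $i$ together with one linear constraint $\sum_{i=1}^m c_i(\be_k)\phi_i=0$ coming from pairing $b_1(\be_k)$ with the block structure (the coefficient $c_i$ being $\ga_i(\be_k)$ times the number of indices in $P_i$, or something of that shape — the precise constants are routine). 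Hence $\dim\opkern Q^\cP_{\be_k}=(m-1)n_k=(|\cP|-1)n_k$, and a continuity/perturbation argument exactly as in Lemma~\ref{lem:bif-suff} shows that on the positive and negative eigenspaces of $Q^\cP_{\be_k}$ the form $Q^\cP_\be$ keeps its sign for $\be$ near $\be_k$.

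The crux is then, as before, to compute the sign of the $\be$-derivative $(Q^\cP_{\be_k})'$ restricted to $\opkern Q^\cP_{\be_k}$ and show it is positive definite, which forces exactly $(|\cP|-1)n_k$ eigenvalues to cross zero from $-$ to $+$ as $\be$ increases through $\be_k$. Here I would use the chain rule: differentiating $Q^\cP_\be(\phi)=Q_\be(i^\cP_\be(\phi))$ in $\be$ produces two terms — one from $Q'_\be$ evaluated at $i^\cP_{\be_k}(\phi)$, and one from $\frac{\pa}{\pa\be}i^\cP_\be(\phi)$ plugged into the (indefinite) bilinear form $Q_{\be_k}$. On the kernel the second term should vanish or be controllable: the point is that $\frac{\pa}{\pa\be}i^\cP_\be(\phi)$ lies in the range of $i^\cP_{\be_k}$ modulo a correction, and $Q_{\be_k}$ paired between a kernel element $i^\cP_{\be_k}(\phi)$ and anything in the reduced subspace vanishes because the kernel element is orthogonal (in the $Q_{\be_k}$-sense) to the whole reduced space — indeed $i^\cP_{\be_k}(\phi)\in V^0_{\be_k}$ is a genuine zero of $Q_{\be_k}$. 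So only the first term survives, and by the computation in Lemma~\ref{lem:bif-suff} (the identity $\langle C'(\be)u,u\rangle=f'(\be)|u|^2$ on $b_1(\be)^\perp$, together with $\Phi(x)\in b_1(\be_k)^\perp$ for $\Phi$ in the kernel) we get
\[
 (Q^\cP_{\be_k})'(\phi)=-f'(\be_k)\int_\Om |i^\cP_{\be_k}(\phi)|^2\,w^2>0
\]
since $f'(\be_k)<0$ by Remark~\ref{rem:f} and $i^\cP_{\be_k}$ is injective. This gives positive definiteness on the kernel and the jump formula follows.

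The main obstacle I expect is the bookkeeping in the middle step: verifying cleanly that $\frac{\pa}{\pa\be}i^\cP_\be(\phi)$ contributes nothing to $(Q^\cP_{\be_k})'$ on the kernel, since $i^\cP_\be$ depends on $\be$ through the ratios $\ga_j(\be)/\ga_i(\be)$ and so its $\be$-derivative is not itself of the form $i^\cP_{\be_k}(\text{something})$. The resolution is that the extra piece is still a $\cP$-admissible variation (it respects the block proportionality up to first order, hence lies in the tangent space to $X^\cP_{\be_k}$ at the locked solution), and $Q_{\be_k}$ vanishes when one slot is a kernel element $i^\cP_{\be_k}(\phi)\in V^0_{\be_k}$ and the other slot is \emph{any} element of that tangent space — because $V^0_{\be_k}$ is the $Q_{\be_k}$-null space and is $Q_{\be_k}$-orthogonal to everything. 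Once that is pinned down, everything else is a verbatim repeat of Lemma~\ref{lem:bif-suff} with $n$ replaced by $m$.
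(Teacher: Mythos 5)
Your proposal is correct and follows essentially the same route as the paper: identify the kernel of the reduced Hessian with $V^0_{\be_k}\cap X^\cP_{\be_k}$, count its dimension as $(|\cP|-1)n_k$ ($m$ blocks of components in $V_k$ subject to one nontrivial linear constraint, whose coefficients are in fact $c_i=\sum_{j\in P_i}\ga_j^2/\ga_i>0$ rather than your guess, though this does not affect the count), and rerun the transversality computation of Lemma~\ref{lem:bif-suff} on this kernel. Your extra care with the term coming from $\frac{\pa}{\pa\be}i^\cP_\be$ is resolved exactly as you say --- it pairs an element of $V^0_{\be_k}$ against the bilinear form of $Q_{\be_k}$, which vanishes identically since $V^0_{\be_k}$ is the radical --- and this is a point the paper's own (terser) proof leaves implicit.
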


\begin{proof}
This follows from Proposition \ref{prop:reduction} and Lemma
\ref{lem:bif-suff} if we can show that
\[
\dim (V^0_{\be_k}\cap X^\cP_{\be_k})=(|\cP|-1)n_k ;
\]
here $V^0_{\be_k}$ is the kernel of the Hessian of $J_{\be_k}$ at $u(\be_k)$ as in
\eqref{eq:kernel}, and $n_k=\dim V_k$ as in \eqref{eq:kernel-scalar}; recall
the description of $V^0_{\be_k}$ in Proposition \ref{prop:bif-nec}.
For simplicity we write $\ga_j=\ga_j(\be_k)$ below.
\begin{equation*}
\begin{aligned}
\phi\in V^0_{\be_k}\cap X^\cP_{\be_k}
 \quad\Longleftrightarrow\quad &\ \phi_1,\ldots, \phi_n\in V_k,\
     \sum^n_{j=1}\ga_j\phi_j=0,\text{ and}\\
   {}&\ \phi_j=\frac{\ga_j}{\ga_i}\phi_i\text{ for } j\in P_i,\ i=1,\ldots,m=|\cP|\\
 \quad\Longleftrightarrow\quad &\ \phi_1,\ldots, \phi_m\in V_k,\
     \sum^m_{i=1}\left(\sum_{j\in P_i}\frac{\ga^2_j}{\ga_i}\right)\phi_i=0,\text{ and}\\
   {}&\ \phi=i^\cP_{\be_k}(\phi_1, \ldots,\phi_m)
\end{aligned}
\end{equation*}
\end{proof}

\begin{Corollary}\label{cor:bif-locked}
For every partition $\cP$ of $\{1,\ldots,n\}$ with $|\cP|\geq 2$, and for every
$k\geq2$ the point $(\be_k,u(\be_k))\in\cT_w$ is a bifurcation point of $\cP$-locked
solutions of \eqref{eq:main}. It is a global bifurcation point of $\cP$-locked
solutions if $(|\cP|-1)\cdot n_k$ is odd.
\end{Corollary}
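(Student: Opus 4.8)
The plan is to reduce the corollary to the theory already developed for the full functional $J_\be$ by working entirely within the finite-codimension subspace $X^\cP_\be$, which is isomorphic to $E^m$ via $i^\cP_\be$. The key point is that Proposition~\ref{prop:reduction} turns the constrained problem into an \emph{honest} variational problem on $E^m$: critical points of $J^\cP_\be = J\circ i^\cP_\be$ on $E^m$ correspond exactly to $\cP$-locked solutions of \eqref{eq:main}. So bifurcation of $\cP$-locked solutions of \eqref{eq:main} from $\cT_w$ is the same thing as bifurcation of critical points of $J^\cP_\be$ from the branch $\cT^\cP_w = \{(\be,\pi u(\be))\}$.

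First I would record that $J^\cP_\be$ is $C^2$ on $E^m$ (it is the composition of the $C^2$ functional $J_\be$ with the bounded linear map $i^\cP_\be$, which depends smoothly on $\be$), and that $\pi u(\be) = (\al_1(\be)w,\dots,\al_m(\be)w)$ is a critical point of $J^\cP_\be$ for all $\be\in(\overline{\be},\mu_1)\cup(\mu_n,\infty)$. Then, by Lemma~\ref{lem:bif-locked-suff}, the Morse index $m^\cP(\be)$ of $\pi u(\be)$ jumps by $(|\cP|-1)n_k$ as $\be$ decreases through $\be_k$, and the Hessian of $J^\cP_{\be_k}$ at $\pi u(\be_k)$ has kernel of dimension exactly $(|\cP|-1)n_k$ (this is the content of the isomorphism established in the proof of Lemma~\ref{lem:bif-locked-suff}, identifying $V^0_{\be_k}\cap X^\cP_{\be_k}$ with the relevant subspace of $E^m$). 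A nonzero change of Morse index forces the Hessian to be degenerate at $\be_k$, hence $(\be_k,\pi u(\be_k))$ is the only candidate for a bifurcation point in that range, and the standard variational bifurcation theorem of Krasnoselski (in the version of \cite[Theorem~8.9]{Mawhin-Willem:1989}, exactly as in the proof of the Theorem in Section~\ref{sec:bif-suf}) yields a sequence of $\cP$-locked solutions bifurcating from $\cT^\cP_w$ at $\be_k$. Positivity of the bifurcating solutions is obtained as in the proof of that Theorem, arguing as in \cite[pp.~354--355]{bartsch-dancer-wang:2010} or via \cite[Corollary~1.4,~Theorem~1.5]{Bartsch-Dancer:2010}; note that once $u_i>0$ the relation $u_j=(\ga_j/\ga_i)u_i$ with $\ga_j,\ga_i>0$ for $\be<\mu_1$ automatically gives $u_j>0$ for all $j$.

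For the global statement, when $(|\cP|-1)n_k$ is odd the Morse index of $\pi u(\be)$ changes by an odd number across $\be_k$, so the Leray--Schauder degree of the corresponding fixed-point operator on $E^m$ changes sign. One then applies Rabinowitz-type global bifurcation (as already indicated in Remark~(a) following the Theorem of Section~\ref{sec:bif-suf}, now carried out on $E^m$ rather than $E^n$): there is a connected set $\cS^\cP_k\subset(\R\times E^m)\setminus\cT^\cP_w$ of solutions of the $E^m$-problem whose closure contains $(\be_k,\pi u(\be_k))$ and which either is unbounded or meets $\cT^\cP_w$ at another point. Transporting this set through the homeomorphism $(\be,v)\mapsto(\be,i^\cP_\be(v))$ onto $\bigcup_\be\{\be\}\times X^\cP_\be$ gives the asserted global branch of $\cP$-locked solutions of \eqref{eq:main}.

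The main obstacle is bookkeeping rather than a deep new idea: one must be careful that $i^\cP_\be$ depends on $\be$ (so $\cT^\cP_w$ is genuinely a curve and the reduced problem is a $\be$-dependent family, not a fixed operator), that the compact-embedding hypothesis (A) is inherited by $E^m$ so the degree-theoretic setup of Section~\ref{sec:bif-suf} applies verbatim, and that ``global'' here is in the space $E^m$ (equivalently in $\bigcup_\be X^\cP_\be$), which is weaker than a global statement in $E^n$ — the branch may leave the region $\be<\mu_1$ where the coordinates $i^\cP_\be$ are defined, and one should only claim what the Rabinowitz alternative gives. Establishing positivity of \emph{all} components of the bifurcating solutions is immediate from the locking relations once it is known for one component in each block $P_i$, so no extra work beyond the cited arguments is needed there.
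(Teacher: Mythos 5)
Your proposal is correct and follows exactly the route the paper intends for this corollary: the local statement comes from the Morse-index jump of Lemma~\ref{lem:bif-locked-suff} for the reduced functional $J^\cP_\be$ on $E^m$ via the Krasnoselski/Mawhin--Willem theorem (with positivity handled as in the Theorem of Section~\ref{sec:bif-suf}, and propagated to all components by the locking relations), and the global statement comes from the odd index jump forcing a sign change of the Leray--Schauder degree and a Rabinowitz-type alternative, transported back through $i^\cP_\be$. Your added caveats (the $\be$-dependence of $i^\cP_\be$, inheritance of hypothesis (A), and the precise meaning of ``global'') are appropriate and consistent with the paper's Remark following that Theorem.
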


In the application of the Corollary \ref{cor:bif-locked} one has to be cautious because a $\cP$-locked solution may also be $\cP '$-locked for $\cP\neq\cP '$. This is the case if for any $P'_i\in\cP '$ there exists a $P_j\in\cP$ with $P'_i\subset P_j$. On the other hand, the bifurcating $\cP$-locked solutions are different from the $\cP'$-locked solutions if every solution which is both $\cP$-locked and $\cP'$-locked is automatically completely locked. Recall that the completely locked solutions are those in $\cT_w$. \\

For a subset $A\subset\{1,\ldots,n\}$ we set $A^c:=\{1,\ldots,n\}\setminus A$ and $\cP_A:=\{A, A^c\}$. Given two subsets $\emptyset\neq A,B\subsetneq\{1,\ldots, n\}$ such that $A\neq B$ and $A^c\neq B$, a solution which is $\cP_A$-locked and $\cP_B$-locked is
necessarily completely locked. We therefore obtain the following multiplicity result.

\begin{Corollary}\label{cor:bif-mult}
Suppose $n_k$ is odd. For every subset $\emptyset\neq A\subsetneq\{1,\ldots, n\}$ there exists a global branch $\cS^A_k$ of $\cP_A$-locked solutions of \eqref{eq:main} bifurcating from $\cT_w$ at $(\be_k,u(\be_k))$. The branches $\cS^A_k$ and $\cS^B_k$ are disjoint except when $A=B$ or $A=B^c$. In particular, there exist at least $2^{n-1}-1$ such global branches which are different.
\end{Corollary}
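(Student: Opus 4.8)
\textbf{Proof plan for Corollary~\ref{cor:bif-mult}.}
The plan is to apply Corollary~\ref{cor:bif-locked} to each two-block partition $\cP_A=\{A,A^c\}$ and then argue that the resulting global branches are pairwise distinct. First I would observe that for $\cP=\cP_A$ we have $|\cP|=2$, so $(|\cP|-1)\cdot n_k=n_k$, which is odd by hypothesis; hence Corollary~\ref{cor:bif-locked} directly yields, for every such $A$, a connected global branch $\cS^A_k\subset\R\times E^n$ of $\cP_A$-locked solutions of \eqref{eq:main} with $(\be_k,u(\be_k))\in\overline{\cS^A_k}$. This is essentially immediate from the preceding material; the real content is the disjointness statement.

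Next I would make precise what it means for two of these branches to ``be different'' and then separate them. The key observation, already recorded in the paragraph preceding the corollary, is that a solution which is simultaneously $\cP_A$-locked and $\cP_B$-locked is completely locked, i.e.\ lies on $\cT_w$, provided $\{A,A^c\}\neq\{B,B^c\}$ as unordered pairs --- equivalently $A\neq B$ and $A\neq B^c$. I would spell out the elementary combinatorial reason: being $\cP_A$-locked and $\cP_B$-locked means $u_i/u_j$ is constant whenever $i,j$ lie in a common block of $\cP_A$ \emph{or} a common block of $\cP_B$; the equivalence relation generated by these two partitions has blocks that are unions of the nonempty sets among $A\cap B$, $A\cap B^c$, $A^c\cap B$, $A^c\cap B^c$, and under the stated hypothesis on $A,B$ one checks that this generated relation is the full relation on $\{1,\dots,n\}$ (the ``Venn diagram'' of $A$ and $B$ is connected as a graph on these four cells once neither $A=B$ nor $A=B^c$ holds). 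Hence any common $\cP_A$- and $\cP_B$-locked solution has all ratios $u_i/u_j$ constant, so it is completely locked and thus, by Proposition~\ref{prop:locked} together with the definition of $\cT_w$, lies on $\cT_w$. Therefore $\cS^A_k\cap\cS^B_k\subset\cT_w$; but by construction each $\cS^A_k$ is contained in the complement of $\cT_w$ (the global bifurcation alternative produces solutions off the trivial branch, as in Remark~5.3(a)), so $\cS^A_k\cap\cS^B_k=\emptyset$ whenever $\{A,A^c\}\neq\{B,B^c\}$.

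Finally I would count. The two-block partitions $\cP_A$ are in bijection with unordered pairs $\{A,A^c\}$ with $\emptyset\neq A\subsetneq\{1,\dots,n\}$; there are $2^n-2$ choices of such $A$, and each partition is counted twice (as $\{A,A^c\}$ and $\{A^c,A\}$), giving $(2^n-2)/2=2^{n-1}-1$ distinct partitions and hence, by the disjointness just established, at least $2^{n-1}-1$ pairwise distinct global branches $\cS^A_k$, with $\cS^A_k=\cS^B_k$ only when $A=B$ or $A=B^c$. This proves the corollary.

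\textbf{Main obstacle.} The routine parts (invoking Corollary~\ref{cor:bif-locked}, the final count) are trivial; the only step requiring genuine care is the combinatorial lemma that the join of the partitions $\cP_A$ and $\cP_B$ is the trivial one-block partition exactly when $\{A,A^c\}\neq\{B,B^c\}$ --- i.e.\ verifying that the four cells $A\cap B$, $A\cap B^c$, $A^c\cap B$, $A^c\cap B^c$ are linked into a single block, for which one must handle the degenerate cases where one of the cells is empty (e.g.\ $A\subsetneq B$, where $A\cap B^c=\emptyset$) and check connectivity of the remaining cells still holds. A secondary point to be careful about is making sure the global branch $\cS^A_k$ really is disjoint from $\cT_w$ rather than merely attached to it at the bifurcation point; this is standard in Rabinowitz-type global bifurcation but should be stated explicitly.
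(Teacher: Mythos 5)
Your proposal is correct and follows exactly the route the paper intends: the corollary is stated without a separate proof precisely because it is the combination of Corollary~\ref{cor:bif-locked} applied to the two-block partitions $\cP_A$ (where $(|\cP_A|-1)n_k=n_k$ is odd) with the observation, recorded in the paragraph preceding the statement, that a simultaneously $\cP_A$- and $\cP_B$-locked solution with $A\neq B$, $A\neq B^c$ is completely locked, plus the count of the $2^{n-1}-1$ unordered pairs $\{A,A^c\}$. Your explicit verification of the combinatorial join argument and of the degenerate cells is a welcome elaboration of what the paper leaves implicit, but it is the same proof.
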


\begin{remark}
a) The corollary applies, for instance, if $\Om\subset\R$ is a bounded interval, or if $\Om\subset\R^N$, $2\le N\le3$, is radially symmetric and one looks for radially symmetric solutions, i.~e.\ $G=O(N)$. Then problem \eqref{eq:scalar} has a unique solution which is also nondegenerate in $E$. This has been proved in \cite{ni-takagi:1991, ni-wei:1995, Tang, FMT}, in the case of a ball, an annulus, and $\R^N$.

b) If $n_k=1$ then one can apply the Crandall-Rabinowitz theorem to show that the bifurcating branches $\cS^A_k$ of $\cP_A$-locked solutions of \eqref{eq:main} are $C^1$-curves near the bifurcation point.

c) The global features of the branches $\cS_k^A$ are not known. If $\Omega$ is a ball or an annulus and $n=2$ it has been proved in \cite{bartsch-dancer-wang:2010} that the bifurcating branches $\cS_k=\cS_k^{\{1\}}\subset(-\infty,0)\times E^2$ are bounded over bounded subsets of $(-\infty,0)$, hence they cover all of $(-\infty,\be_k)$. An extension of this result to $n>2$ is work in progress.
\end{remark}

{\bf Acknowledgment:} The author thanks Pavol Quittner for discussions on the topic, and for the invitation and hospitality during a visit at the Comenius University in Bratislava in spring 2011.


\def\by{\relax}
\def\paper{\relax}
\def\jour{\textit}
\def\vol{\textbf}
\def\yr#1{\rm(#1)}
\def\pages{\relax}
\def\book{\textit}
\def\inbook{In: \textit}
\def\finalinfo{\rm}

\bibliographystyle{amsplain}

\noindent
{\sc Address of the author:}\\[1em]
Thomas Bartsch\\
Mathematisches Institut\\
University of Giessen\\
Arndtstr.\ 2\\
35392 Giessen\\
Germany\\
Thomas.Bartsch@math.uni-giessen.de

\end{document}